\begin{document}

\title{$p$-retract Rationality and Norm One Tori}
\author{Kazuki Sato}
\begin{abstract}
We study whether the norm one torus associated with a finite separable non-Galois field extension $K/k$ is $p$-retract rational over $k$ for a prime $p$,
focusing on the case where the Galois group of the Galois closure of $K/k$ is either the symmetric or the alternating group. 
\end{abstract}

\date{\today}
\address{Section of Liberal Arts and Sciences, National Institute of Technology (KOSEN), Ichinoseki College, Japan}
\email{kazuki-s@ichinoseki.ac.jp}
\subjclass[2020]{14E08, 20C10, 20G15}
\maketitle

\section{Introduction}
In algebraic geometry, a fundamental problem is to determine whether a given algebraic variety is rational, that is, birationally equivalent to a projective space.
It is also important to determine stably rationality, retract rationality, $p$-retract rationality (for a prime $p$) and unirationality, which are weaker notions of rationality.
It is not difficult to see that 
\begin{center}
rational $\Lrarrow$ stably rational $\Lrarrow$
retract rational $\Lrarrow$
$p$-retract rational $\Lrarrow$
unirational. 
\end{center}
The notion of retract rationality was introduced by Saltman \cite{Salt84} and
that of $p$-retract rationality is due to Merkurjev \cite{Mer20}.

In this paper, we study the rationality of algebraic tori.
Let $k$ be a field and $k^{\mathrm{sep}}$ be a separable closure of $k$.
We recall that an algebraic torus over $k$ is a group scheme $T$ over $k$ that satisfies $T_{k^{\mathrm{sep}}} \cong (\mathbb{G}_{m,k^\mathrm{sep}})^n$ for some nonnegative integer $n$.
Note that an algebraic torus over $k$ is always unirational over $k$ \cite{vos}*{p. 40, Example 21}.
The rationality problem is well-understood for tori of small dimensions.
Voskresenski\u{i} \cite{vos67} showed that all tori of dimension $2$ are rational over $k$.
Kunyavski\u{i} \cite{kun90} solved the rationality problem for $3$-dimensional algebraic tori.
As a corollary, it has been proven that for $3$-dimensional algebraic tori stably rationality implies rationality  (i.e., the Zariski problem is solved affirmatively in this case).
Hoshi-Yamasaki \cite{hy17} classified algebraic tori of dimension $4$ and $5$ that are stably or retract rational.

Let $K$ be a finite separable field extension of $k$. Then we set 
\[ R^{(1)}_{K/k} \bbg_m= \Ker ( \operatorname{N}_{K/k} : \Res_{K/k} \bbg_{m,K} \to \bbg_{m,k}),\]
where $\Res_{K/k}$ is the Weil restriction.
We call it the {\it norm one torus}  associated to $K/k$.
If $K/k$ is Galois,
then 
$R^{(1)}_{K/k} \bbg_m$ is retract rational over $k$ if and only if all the Sylow subgroups of $\gal(K/k)$ are cyclic \cite{EM75},
and 
$R^{(1)}_{K/k} \bbg_m$ is $p$-retract rational over $k$ if and only if all the Sylow $p$-subgroups of $\gal(K/k)$ are cyclic \cite{Sca20}*{Proposition 5.1}.
In particular, $R^{(1)}_{K/k} \bbg_m$ is retract rational over $k$ if and only if it is $p$-retract rational over $k$ for every prime $p$.
In fact, this is true for any algebraic tori \cite{Sca20}*{Theorem 1.1}.
Hence, we consider the $p$-retract rationality of the norm one tori in the case where $K/k$ is non-Galois.

Let $S_n$ (resp. $A_n$) be the symmetric (resp. alternating) group on $n$ letters.
The main result in this paper is the following.

\begin{thm}
\label{intromain}
Let $n \geq 2$ be an integer and $p$ a prime.
Let $K/k$ be a non-Galois separable field extension of degree $n$ and $L/k$ the Galois closure of $K/k$.
\begin{enumerate}
\item Assume that $\gal (L/k)=S_n$ and $\gal(L/K)=S_{n-1}$.
Then the norm one torus 
$R^{(1)}_{K/k} \bbg_m$ associated to $K/k$
is $p$-retract rational over $k$ if and only if $n$ is a prime or $p$ is coprime to the composite $n$.
\item Assume that $\gal (L/k)=A_n$ and $\gal(L/K)=A_{n-1}$.
Then the norm one torus
$R^{(1)}_{K/k} \bbg_m$ associated to $K/k$
is $p$-retract rational over $k$ if and only if $n$ is a prime or $p$ is coprime to the composite $n$.
\end{enumerate}
\end{thm}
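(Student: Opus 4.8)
The plan is to recast the problem in terms of integral representation theory and then reduce it, for the fixed prime $p$, to a concrete question about the natural permutation action. Write $G=\gal(L/k)$ and $H=\gal(L/K)$, so that $G/H$ is identified with $\{1,\dots,n\}$ carrying the natural action of $S_n$ (resp.\ $A_n$), and the $G$-lattice attached to $R^{(1)}_{K/k}\bbg_m$ is $J_{G/H}=\mathbb{Z}[G/H]/\mathbb{Z}\cdot N$ with $N=\sum_{gH\in G/H}gH$. Combining the flasque resolution description of retract rationality (Saltman, Colliot-Th\'el\`ene--Sansuc) with the $p$-local refinement of \cite{Mer20} and \cite{Sca20}, $R^{(1)}_{K/k}\bbg_m$ is $p$-retract rational over $k$ if and only if the flasque class $[J_{G/H}]^{\mathrm{fl}}$ restricts to an invertible lattice over a Sylow $p$-subgroup $G_p$ of $G$. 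Since invertibility of a lattice is preserved under restriction to subgroups and is unchanged by $G$-conjugation, everything is controlled by the way $G_p$ permutes $\{1,\dots,n\}$.

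For the ``if'' direction I would isolate two elementary mechanisms. First, if $G_p$ fixes a point $*\in\{1,\dots,n\}$ (equivalently $G_p$ lies in a conjugate of $H$), then $\mathbb{Z}[G/H]|_{G_p}=\mathbb{Z}\cdot e_*\oplus\mathbb{Z}[(G/H)\setminus\{*\}]$, and the $G_p$-equivariant map $(a,v)\mapsto v-aN'$, where $N'=\sum_{x\neq *}e_x$ is $G_p$-invariant, identifies $J_{G/H}|_{G_p}$ with the permutation lattice $\mathbb{Z}[(G/H)\setminus\{*\}]$; hence its flasque class is trivial, in particular invertible. Second, if $G_p$ is cyclic then every $G_p$-lattice has invertible flasque class by \cite{EM75}. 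Now a $p$-group permuting $n$ points has orbit sizes that are powers of $p$, so it fixes a point as soon as $p\nmid n$; and if $n$ is prime then either $p\nmid n$ or $p=n$, in which case $G_p$ has order $p$ and is cyclic (generated by an $n$-cycle, which is even and therefore also lies in $A_n$). This proves $p$-retract rationality whenever $n$ is prime or $p\nmid n$, uniformly in both cases.

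For the ``only if'' direction assume $n$ is composite and $p\mid n$, say $n=pm$ with $m\ge 2$. I would produce a subgroup $A\cong C_p\times C_p$ of (a conjugate of) $G_p$ acting on $\{1,\dots,n\}$ faithfully and without fixed points. For odd $p$ this is immediate: split $\{1,\dots,n\}$ into $m\ge 2$ blocks of size $p$ and let $A$ act so that at least two distinct subgroups of order $p$ occur as block stabilizers; the generators are products of $p$-cycles, hence even, so $A\le A_n$ as well. For $p=2$ one writes down an explicit Klein four-group of even permutations with no fixed point, using free orbits of length $4$ together with, when $n\equiv 2\pmod 4$, a single block of six points carrying three length-two orbits with pairwise distinct stabilizers. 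Because invertibility passes to subgroups, it then suffices to establish the following.
\begin{center}\emph{Crux: if $A\cong C_p\times C_p$ acts faithfully and without fixed points on a finite set $X$, then $[J_X]^{\mathrm{fl}}$ is not invertible over $A$.}\end{center}

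This is the step I expect to be the main obstacle, because --- in contrast with the fixed-point case --- the norm element $N$ genuinely couples the orbits and $J_X$ does not split off $J_Y$ for a sub-$A$-set $Y$. I would attack it through a flasque resolution $0\to J_X\to P\to F\to 0$ over $A$ and the Endo--Miyata analysis of $\mathbb{Z}[C_p\times C_p]$-lattices, computing the image of $[F]$ in the (nontrivial) flasque class group of $C_p\times C_p$ and showing it is non-invertible. Concretely I expect to reduce to the minimal faithful fixed-point-free configurations, namely the regular set $X=A$ and $X=A/\langle\sigma\rangle\sqcup A/\langle\tau\rangle$ for distinct subgroups of order $p$, where non-invertibility is the classical fact behind the failure of retract rationality of the $C_p\times C_p$ norm one torus; the delicate part is to propagate this to an arbitrary faithful fixed-point-free $X$ by an additivity argument for flasque classes that absorbs the remaining orbits. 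Granting the Crux, restriction shows $[J_{G/H}]^{\mathrm{fl}}|_{G_p}$ is non-invertible, so $R^{(1)}_{K/k}\bbg_m$ is not $p$-retract rational, which completes both (1) and (2).
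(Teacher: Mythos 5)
Your reduction to invertibility of the flasque class over a Sylow $p$-subgroup and your ``if'' direction are correct and essentially the paper's own argument (Lemma \ref{lemma22} together with Proposition \ref{coprime}). The fatal problem is in the ``only if'' direction: the Crux you reduce everything to is \emph{false}, for every prime $p$, so no proof of it can be completed. For $p=2$, take $A=\{1,\sigma,\tau,\sigma\tau\}\cong C_2\times C_2$ and $X=A/\lan \sigma\ran \sqcup A/\lan \tau\ran$. This $A$-set is faithful and fixed-point-free, and $J_X$ is the character module of the multinorm torus $T=\Ker(\operatorname{N}_{K_1/k}\cdot \operatorname{N}_{K_2/k}:\Res_{K_1/k}\bbg_m\times \Res_{K_2/k}\bbg_m\to\bbg_m)$ attached to two \emph{distinct} quadratic extensions $K_1,K_2$ of $k$. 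But $T$ is rational: the birational map $(x,y)\mapsto(x^{-1},y)$ identifies $T$ with the affine quadric $\operatorname{N}_{K_1/k}(x)=\operatorname{N}_{K_2/k}(y)$ in $\bba^4_k$, a three-dimensional quadric with the smooth rational point corresponding to $(x,y)=(1,1)$, hence rational by projection. Therefore $\rho_A(J_X)=0$, in particular invertible. This is exactly one of the two ``minimal configurations'' your reduction rests on; the classical non-invertibility fact from \cite{EM75} concerns the \emph{regular} set $X=A$ (the norm one torus of a biquadratic Galois extension), not this two-orbit multinorm lattice. The paper's architecture silently confirms this failure: Proposition \ref{evenS} needs $n/2\geq 3$, the case $n=4$ is handled by a completely separate argument via \cite{Endo11}*{Theorem 4.3}, and Proposition \ref{evenA2} is engineered so that all \emph{three} involutions of the Klein four group occur as stabilizers.

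The Crux also fails for odd $p$, in a way that destroys your proposed ``additivity argument that absorbs the remaining orbits.'' Take $X=A\sqcup A/\lan\sigma\ran$, which is faithful (it contains a free orbit) and fixed-point-free. The corresponding torus is $T=\Ker(\Res_{L/k}\bbg_m\times \Res_{K_2/k}\bbg_m\to\bbg_m)$ with $L/k$ the $C_p\times C_p$-Galois extension and $K_2=L^{\lan\sigma\ran}$, and the map $(x,y)\mapsto(x,\operatorname{N}_{L/K_2}(x)\,y)$ is an isomorphism $T\cong \Res_{L/k}\bbg_m\times R^{(1)}_{K_2/k}\bbg_m$. Since $K_2/k$ is cyclic, $R^{(1)}_{K_2/k}\bbg_m$ is retract rational by \cite{EM75}, so $\rho_A(J_X)$ is invertible. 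On the lattice side the mechanism is that the $A$-map $\bbz[A/\lan\sigma\ran]\to\bbz[A]$, $g\lan\sigma\ran\mapsto g\sum_{h\in\lan\sigma\ran}h$, carries norm element to norm element, whence $J_X\cong\bbz[A]\oplus J_{A/\lan\sigma\ran}$: adding an orbit makes the norm relation \emph{migrate into the new orbit}, and can trivialize the flasque class (here $X=A$ alone is non-invertible, while $X=A\sqcup A/\lan\sigma\ran$ is invertible). So non-invertibility is not monotone under enlarging $X$, and no absorption lemma of the kind you envisage exists. What is actually true, and what the paper uses, are criteria sensitive to the precise multiset of stabilizers: \cite{Endo01}*{Theorem 1} for its chosen configurations (at least two pairwise distinct hyperplane stabilizers and no free or fixed orbits for odd $p$; at least three for $p=2$), \cite{EM75}*{Theorem 1.5} via \cite{Endo11}*{Corollary 1.4} when all orbits are free, and \cite{Endo11}*{Theorem 4.3} for the residual case $n=4$. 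Your explicit permutation constructions could be rerouted into these safe configurations, but the blanket Crux they feed into is not a theorem, so the ``only if'' half of your proof does not go through as written.
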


\begin{rem}
For the case of symmetric groups, it is known that 
the norm one torus 
$R^{(1)}_{K/k} \bbg_m$ associated to $K/k$
is retract rational over $k$ if and only if $n$ is a prime \citelist{\cite{MR878473} \cite{MR1790337} \cite{EM75}}.
For the case of alternating groups, the norm one torus
$R^{(1)}_{K/k} \bbg_m$ associated to $K/k$
is retract rational over $k$ if and only if $n$ is a prime \cite{Endo11}.
\end{rem}

We organize this paper as follows.
In Section \ref{pre}, we prepare some basic definitions and known results on the rationality of algebraic tori.
In Section \ref{pinv}, we recall the definition and establish basic properties on $p$-invertible lattices.
In Section \ref{mainsec}, we give a proof of Theorem \ref{intromain}.

\section{Preliminaries}\label{pre}
Let $k$ be a field and
$X$ be an algebraic variety over $k$.
We say that $X$ is: 
\begin{itemize}
\item \textit{rational} over $k$ if $X$ is birationally equivalent over $k$ to the affine space $\bba_k^n$;
\item \textit{stably rational} over $k$ if $X \times \bba_k^m$ is rational over $k$ for some $m \geq 0$;
\item \textit{retract rational} over $k$ if the identity map of $X$ factors rationally through some projective space;
\item {\it $p$-retract rational} over $k$ for a prime $p$ if there exists a rational dominant morphism $f:\mathbb{P}^n \dashrightarrow X$ for some $n$ such that for every nonempty open subset $U \subset \mathbb{P}^n$ in the domain of definition of $f$, there exists a morphism of varieties $g:Y \to \mathbb{P}^n$ such that $\operatorname{Im}(g) \subset U$ and the composition $f\circ g:Y \to X$ is dominant of finite degree prime to $p$: 
\begin{equation*}
\xymatrix{
{ } 
&{Y} \ar[dl]_g \ar[d]^{\mathrm{of \ degree \ prime \ to}\ p } \\
{\mathbb{P}^n} \ar@{-->}[r]^f
&{X};
}
\end{equation*}
\item \textit{unirational} over $k$ if there exists a dominant rational map $\mathbb{P}_k^n \dashrightarrow X$ for some $n \geq 0$.
\end{itemize}
The notion of retract rationality was originally introduced by Saltman \cite{Salt84} in the case where $k$ is infinite.
It has been generalized for all varieties over arbitrary fields by Merkurjev \cite{mer17}.
The notion of $p$-retract rationality is due to Merkurjev \cite{Mer20}.
It is not difficult to see that
\begin{center}
rational $\Lrarrow$ stably rational $\Lrarrow$
retract rational $\Lrarrow$
$p$-retract rational $\Lrarrow$
unirational. 
\end{center}
Note that an algebraic torus over $k$ is always unirational over $k$ \cite{vos}*{p. 40, Example 21}.
Any unirational variety is $p$-retract rational for all but finitely many primes $p$ \cite{Sca20}*{\S 1}.

Let $G$ be a finite group.
For a $G$-lattice, we mean a finitely generated $\bbz[G]$-module which is $\bbz$-free as an abelian group.
We say that a $G$-lattice $M$ is
\begin{itemize}
\item \textit{permutation} if it has a $\bbz$-basis permuted by $G$,
that is, if $M \cong \bigoplus_{1 \leq i \leq m} \bbz [G/H_i]$ for subgroups $H_1, \dots, H_m$;
\item \textit{invertible} if it is a direct summand of a permutation $G$-lattice;
\item \textit{flasque} if $\hat{H}^{-1}(H,M)=0$ for any subgroup $H$ of $G$ where $\hat{H}$ is the Tate cohomology.
\end{itemize}
Two $G$-lattices $M_1$ and $M_2$ are {\it similar} if there exist permutation $G$-lattices $P_1$ and $P_2$ such that $M_1 \oplus P_1 \cong M_2 \oplus P_2$.
We denote the similarity class of $M$ by $[M]$.
The similarity classes form a commutative monoid $S_G$ by $[M_1]+[M_2]:=[M_1 \oplus M_2]$.
It is easy to see that $M$ is an invertible $G$-lattice if and only if $[M] \in S_G$ is an invertible element.
For any $G$-lattice $M$, there is an exact sequence (a {\it flasque resolution} of $M$)
\[ 0 \to M \to P \to F \to 0 \]
of $G$-lattices, where $P$ is permutaion and $F$ is flasque \cite{CTS77}*{Lemme 3}.
The similarity class $[F]$ is uniquely determined by the similarity class $[M]$ of $M$ \cite{CTS77}*{Lemme 5}.
We denote $[F]$ by $\rho_G(M)$ and call it {\it the flasque class} of $M$.
The map $\rho_G$ from $S_G$ to itself is additive.
For a $G$-lattice $M$, it is not difficult to see that 
\[ \textrm{permutation} \Lrarrow \textrm{invertible} \Lrarrow \rho_G(M) \ \textrm{is invertible}. \]

Let $L/k$ be a finite Galois field extension and $G=\gal(L/k)$ its Galois group. 
The category of algebraic tori over $k$ which split over $L$ is dual to the category of $G$-lattices (see \cite{vos}*{p. 27, Example 6} for example).
In fact, if $T$ is an algebraic torus over $k$ which split over $L$, then the character module $\hat{T}$ of $T$ may be regarded as a $G$-lattice. 
The flasque class $\rho_G(\hat{T})$ plays a crucial role in the rationality problem for $T$.

\begin{thm}
Let $L/k$ be a finite Galois field extension and $G=\gal(L/k)$ its Galois group.
Let $T$ be an algebraic torus over $k$ which split over $L$.
\begin{enumerate}
\item $($\cite{EM73}*{Theorem 1.6}$)$
$\rho_G(\hat{T})=0$ if and only if $T$ is stably rational over $k$.
\item 
$($\citelist{\cite{Salt84}*{Theorem 3.14} \cite{Lo05}*{Lemma 9.5.4 (b)}}$)$ 
$\rho_G(\hat{T})$ is invertible if and only if $T$ is retract rational over $k$.
\end{enumerate}
\end{thm}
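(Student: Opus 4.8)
The plan is to pass through the torus--lattice anti-equivalence and reduce both statements to lattice-theoretic properties of the flasque part $F$. First I would take a flasque resolution $0 \to \hat{T} \to P \to F \to 0$ and dualize it to an exact sequence of tori
\[ 1 \to S \to Q \to T \to 1, \]
where $\hat{Q}=P$ is permutation and $\hat{S}=F$ is flasque. Under the anti-equivalence a permutation lattice corresponds to a quasi-trivial torus $\prod_i \Res_{k_i/k}\bbg_m$, which is an open subvariety of an affine space and hence rational, and which moreover satisfies $H^1(k',Q)=0$ for every extension $k'/k$ by Shapiro's lemma together with Hilbert 90; an invertible lattice corresponds to a direct factor of a quasi-trivial torus, and is therefore a retract of a rational variety. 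Thus both equivalences should follow once the rationality properties of $T$ are read off from those of $F$ (equivalently of $S$).

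For the \emph{if} directions I would argue directly. In (1), since $\rho_G(\hat{T})=[F]=0$ means that $F$ is stably permutation, I can enlarge $P$ in the resolution (absorbing a permutation complement) so as to assume $F$ itself is permutation; then $S$ is quasi-trivial, so $H^1(k(T),S)=0$ and the generic fibre of $Q\to T$ is a trivial $S$-torsor. Hence $Q$ is birational to $T\times S$, and since both $Q$ and $S$ are rational this forces $k(T)$ to be stably rational. In (2), when $F$ is invertible the torus $S$ is a direct factor of a quasi-trivial torus, so the generic $S$-torsor $Q\to T$ is a retract of a trivial torsor; pushing this through the rational total space $Q$ exhibits the identity of $T$ as factoring rationally through $Q$, i.e. $T$ is retract rational. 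This is exactly Saltman's mechanism.

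For the \emph{only if} directions, which carry the real content, I would invoke the invariance of the flasque class. Concretely, for a smooth projective compactification $X$ of $T$ one has $[\operatorname{Pic}(\bar{X})]=\rho_G(\hat{T})$, and the similarity class of $\operatorname{Pic}(\bar{X})$ is a stable-birational invariant that vanishes for stably rational varieties; hence stable rationality of $T$ forces $\rho_G(\hat{T})=0$, giving (1). For (2), retract rationality supplies a rational self-factorization of $T$ through a projective space which, upon specialization, yields a splitting modulo the relevant $\hat{H}^{-1}$-groups and forces $F$ to be invertible. The main obstacle is precisely establishing these invariance statements --- the stable-birational invariance of $\operatorname{Pic}(\bar{X})$ for (1) and the retract-rationality criterion for (2) --- which are the hard technical cores supplied by \cite{EM73}, \cite{Salt84} and \cite{Lo05}; once they are granted, the remainder is the formal dictionary set up above.
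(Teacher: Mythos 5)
The paper does not prove this theorem at all: it is stated in the Preliminaries purely as a pointer to the literature (Endo--Miyata for (1), Saltman and Lortz for (2)), so there is no in-paper argument to measure your proposal against; the honest comparison is with those references. Your sketch is in fact the standard route taken there and in Colliot-Th\'el\`ene--Sansuc: dualize a flasque resolution $0 \to \hat{T} \to P \to F \to 0$ to an exact sequence of tori $1 \to S \to Q \to T \to 1$ with $Q$ quasi-trivial and $\hat{S}=F$, and read rationality properties of $T$ off the torsor $Q \to T$. Your ``if'' directions are essentially complete. For (1), after absorbing a permutation complement you may take $F$ itself permutation, and $H^1(k(T),S)=0$ (Shapiro plus Hilbert 90) makes $Q$ birational to $T \times S$; since $Q$ and $S$ are open subvarieties of affine spaces, this gives stable rationality of $T$. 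For (2), invertibility of $F$ again gives $H^1(k(T),S)=0$, hence a rational section $T \dashrightarrow Q$, and composing with $Q \to T$ factors the identity of $T$ rationally through the rational variety $Q$; your phrase ``retract of a trivial torsor'' is loose, but the rational section is what is needed and you have it. Your ``only if'' for (1) correctly reduces to the two facts you name, namely $[\operatorname{Pic}(X_{\bar{k}})]=\rho_G(\hat{T})$ for a smooth projective compactification $X$ of $T$ and the stable-birational invariance of that class; one caveat you omit is that the existence of such a compactification is itself nontrivial in positive characteristic, which is one reason the cited sources argue lattice-theoretically.

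The one place where your proposal contains no argument is the ``only if'' of (2): ``a splitting modulo the relevant $\hat{H}^{-1}$-groups'' names no specialization map, no functor, and no mechanism by which a rational factorization of $\mathrm{id}_T$ through projective space constrains $F$. That implication is precisely the substantial content of \cite{Salt84}*{Theorem 3.14} and \cite{Lo05}*{Lemma 9.5.4 (b)}. Since you explicitly grant it to those references, your write-up is a correct citation-backed sketch --- which is exactly the level of completeness the paper itself offers --- but it should not be mistaken for a self-contained proof of part (2).
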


Let $G$ be a finite group and $H$ a subgroup of $G$.
Recall that the augmentation map
$\epsilon_{G/H} : \bbz[G/H] \to \bbz$ is defined by $\epsilon_{G/H}(gH)=1$ for every $gH \in G/H$.
Let $J_{G/H}:=(\ker \epsilon_{G/H})^\circ$ be the dual lattice of $\ker \epsilon_{G/H}$.

Let $k$ be a field and $K$ a finite separable field extension of $k$.
The norm one torus associated to $K/k$ is defined as
\[ R^{(1)}_{K/k} \bbg_m= \Ker ( \operatorname{N}_{K/k} : \Res_{K/k} \bbg_{m,K} \to \bbg_{m,k}),\]
where $\Res_{K/k}$ is the Weil restriction.
Let $L/k$ be the Galois closure of $K/k$ with $G=\gal(L/k)$ and $H=\gal(L/K)$.
Then the norm one torus 
$R^{(1)}_{K/k} \bbg_m$ associated to $K/k$
splits over $L$ and its character module is isomorphic to $J_{G/H}$ \cite{vos}*{Section 4.8}.

\begin{thm}[\citelist{\cite{EM75}*{Theorem 1.5} \cite{Salt84}*{Theorem 3.14}}]
   Let $K/k$ be a finite Galois extension with a Galois group $G$.
   Then the norm one torus $R^{(1)}_{K/k} \bbg_{m}$ is retract rational over $k$ if and only if
   all Sylow $p$-subgroups of $G$ are cyclic for all prime $p$.
\end{thm}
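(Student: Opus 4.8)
The plan is to translate the problem into one about $G$-lattices and then localize at each prime. Because $K/k$ is Galois we have $L=K$ and $\gal(L/K)=1$, so the character module of the norm one torus is $J_{G/1}=(\ker\epsilon_{G/1})^{\circ}$. Writing $I_G:=\ker\epsilon_{G/1}$ for the augmentation ideal, so that $J_{G/1}=I_G^{\circ}$, the retract-rationality criterion recalled above says that $R^{(1)}_{K/k}\bbg_m$ is retract rational over $k$ if and only if the flasque class $\rho_G(I_G^{\circ})$ is invertible. Thus the whole statement is equivalent to the purely lattice-theoretic assertion that $\rho_G(I_G^{\circ})$ is invertible if and only if every Sylow subgroup of $G$ is cyclic.

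I would then reduce to Sylow subgroups using two standard facts: (i) restriction commutes with the flasque class, $\operatorname{res}^{G}_{H}\rho_G=\rho_H\operatorname{res}^{G}_{H}$, since restricting a flasque resolution to $H$ again gives one (restriction preserves permutation lattices, and the flasque condition over $G$ subsumes the subgroups of $H$); and (ii) a $G$-lattice is invertible if and only if its restriction to a Sylow $p$-subgroup $G_p$ is invertible for every prime $p$ (a localization-plus-transfer argument, since $[G:G_p]$ is a unit in $\bbz_{(p)}$ and, away from $p$, the group ring of a $p$-group is a maximal order). The link is the restriction formula for the augmentation ideal: choosing coset representatives and splitting off the augmentation on the $\bbz[H]$-block gives a short exact sequence of $H$-lattices $0\to\bbz[H]^{[G:H]-1}\to I_G\to I_H\to 0$, which splits because $\bbz[H]$ is coinduced, so $\operatorname{Ext}^1_H(I_H,\bbz[H])=0$; dualizing, $\operatorname{res}^{G}_{H}I_G^{\circ}\cong I_H^{\circ}\oplus\bbz[H]^{[G:H]-1}$ and hence $\rho_H(\operatorname{res}^{G}_{H}I_G^{\circ})=\rho_H(I_H^{\circ})$. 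Combining these, $\rho_G(I_G^{\circ})$ is invertible if and only if $\rho_{G_p}(I_{G_p}^{\circ})$ is invertible for every $p$, so I may assume $G=P$ is a $p$-group.

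For the cyclic case, let $P=\langle\sigma\rangle$ and $N_P=\sum_{g\in P}g$. Then $I_P=(\sigma-1)\bbz[P]\cong\bbz[P]/\bbz N_P\cong I_P^{\circ}$, and $0\to I_P\to\bbz[P]\xrightarrow{\epsilon}\bbz\to 0$ is a flasque resolution because the trivial lattice $\bbz$ is flasque; since $\bbz=\bbz[P/P]$ is permutation, this gives $\rho_P(I_P^{\circ})=\rho_P(I_P)=[\bbz]=0$, which is invertible. Consequently, if every Sylow subgroup of $G$ is cyclic then each $\rho_{G_p}(I_{G_p}^{\circ})=0$, so $\rho_G(I_G^{\circ})$ is invertible and $R^{(1)}_{K/k}\bbg_m$ is retract rational (in fact stably rational). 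This settles the ``if'' direction.

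For the converse, suppose some $P=G_p$ is not cyclic; then $P$ contains a subgroup $Q\cong C_p\times C_p$, and since restriction preserves invertibility and $\operatorname{res}^{P}_{Q}I_P^{\circ}\cong I_Q^{\circ}\oplus(\text{permutation})$, it suffices to show that $\rho_Q(I_Q^{\circ})$ is \emph{not} invertible. This is the technical heart, and the step I expect to be the main obstacle. I would compute an explicit flasque representative $F$ of $\rho_Q(I_Q^{\circ})$ (for instance by dualizing a coflasque resolution of $I_Q$) and detect non-invertibility cohomologically: every invertible lattice is a direct summand of a permutation lattice and is therefore coflasque, so $\hat H^{1}(H,F)=0$ for all $H\le Q$, a vanishing depending only on the similarity class. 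Using $0\to I_Q\to\bbz[Q]\to\bbz\to 0$ together with the cohomological triviality of $\bbz[Q]$ and Tate duality, one computes $\hat H^{2}(Q,I_Q^{\circ})\cong H_2(Q,\bbz)^{\vee}\cong\bbz/p$, the nontrivial Schur multiplier of $C_p\times C_p$; for the (cyclic) proper subgroups the analogous group vanishes, so the invariant is concentrated at $Q$. The crux is then to check that this Schur-multiplier class is not annihilated on passing to the flasque representative, so that $\hat H^{1}(Q,F)\neq 0$; this exhibits $F$ as flasque but not coflasque, hence not invertible, and shows $R^{(1)}_{K/k}\bbg_m$ is not retract rational. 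Carrying out this $C_p\times C_p$ computation precisely---producing the resolution and tracking the class through the connecting maps---is the one genuinely nonformal ingredient, the remainder being bookkeeping.
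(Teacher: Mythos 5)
First, a point of comparison: the paper does not prove this theorem at all --- it is imported as a known result of Endo--Miyata and Saltman (\cite{EM75}, \cite{Salt84}) --- so there is no internal argument to measure your sketch against; it has to stand on its own. Your formal reductions do stand: the translation to invertibility of $\rho_G(I_G^{\circ})$, the Sylow reduction (your facts (i) and (ii) are precisely the content of the paper's Lemmas \ref{lemma21} and \ref{lemma22}, i.e.\ Scavia's lemmas), the splitting $I_G|_H\cong I_H\oplus \bbz[H]^{[G:H]-1}$ and its dual, and the computation $\rho_P(I_P^{\circ})=\rho_P(I_P)=0$ for cyclic $P$ are all correct. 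One overclaim: from $\rho_{G_p}(I_{G_p}^{\circ})=0$ for every $p$ you may conclude that $\rho_G(I_G^{\circ})$ is \emph{invertible}, not that it vanishes, so the parenthetical ``in fact stably rational'' does not follow (and is false for some groups with all Sylow subgroups cyclic; invertibility of the flasque class is strictly weaker than its vanishing).

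The genuine gap is the one you flag yourself: the non-invertibility of $\rho_Q(J_Q)$ for $Q\cong C_p\times C_p$ is labelled ``the crux'' but never proved, and this is the entire substance of the ``only if'' direction --- everything else is bookkeeping, as you say. Concretely, what must be shown is that for a flasque resolution $0\to J_Q\to P\to F\to 0$ the induced map $\hat{H}^2(Q,J_Q)\to \hat{H}^2(Q,P)$ is zero; granting this, since $\hat{H}^1(Q,P)=0$ one gets $\hat{H}^1(Q,F)\cong \hat{H}^2(Q,J_Q)\cong \hat{H}^3(Q,\bbz)\cong \bbz/p\bbz\neq 0$, while an invertible lattice, being a summand of a permutation lattice, has $\hat{H}^1=0$. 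Your sketch stops exactly before this vanishing statement, which is not formal for an arbitrary resolution but is provable. The standard way to finish (due to Endo--Miyata and Colliot-Th\'el\`ene--Sansuc, who prove $\hat{H}^1(Q,F)\cong H^3(Q,\bbz)$): write $P=\bigoplus_i \bbz[Q/H_i]$. If $H_i\neq Q$, then $H_i$ is cyclic or trivial, and under Shapiro's isomorphism $\hat{H}^2(Q,\bbz[Q/H_i])\cong \hat{H}^2(H_i,\bbz)$ the $i$-th component of the map factors through the restriction $\hat{H}^2(Q,J_Q)\to \hat{H}^2(H_i,J_Q)$, which vanishes because $J_Q|_{H_i}\cong J_{H_i}\oplus(\mathrm{free})$ and $\hat{H}^2(H_i,J_{H_i})\cong \hat{H}^3(H_i,\bbz)=0$ for $H_i$ cyclic. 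If $H_i=Q$, the $i$-th component of $J_Q\to P$ is an element of $\operatorname{Hom}_Q(J_Q,\bbz)$, which is zero: a $Q$-map $\bbz[Q]\to\bbz$ is an integer multiple of the augmentation, and it must kill $N_Q=\sum_{g\in Q}g$, forcing the multiple to be $0$. Hence the map $\hat{H}^2(Q,J_Q)\to\hat{H}^2(Q,P)$ vanishes and $F$ is flasque but not coflasque, hence not invertible. So your strategy is sound and completes to a correct proof of the cited theorem, but as submitted the decisive computation is missing.
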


\begin{prop}[\cite{Endo11}*{Proposition 1.7}]
Let $K/k$ be a finite separable extension and 
$L/k$ the Galois closure of $K/k$ with $G=\gal(L/k)$ and $H=\gal(L/K)$.
Assume that 
$H$ is a nonnormal Hall subgroup of $G$.
Then the norm one torus $R^{(1)}_{K/k} \bbg_{m}$ is retract rational over $k$ if and only if
all Sylow $p$-subgroups of $G$ are cyclic for any prime $p \mid [G:H]$.
\end{prop}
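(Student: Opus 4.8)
The plan is to translate the statement into lattice language and then localize at each prime. By the cited theorem relating retract rationality to the flasque class, together with the identification $\widehat{R^{(1)}_{K/k}\bbg_m}\cong J_{G/H}$, it suffices to prove that $\rho_G(J_{G/H})$ is invertible if and only if every Sylow $p$-subgroup $G_p$ of $G$ with $p\mid[G:H]$ is cyclic. Throughout I would use the exact sequence obtained by dualizing $0\to\ker\epsilon_{G/H}\to\bbz[G/H]\xrightarrow{\epsilon_{G/H}}\bbz\to0$, namely
\[ 0 \to \bbz \xrightarrow{\nu} \bbz[G/H] \to J_{G/H} \to 0, \]
where $\nu(1)=\sum_{gH\in G/H}gH$ is the norm element.

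The first step is a reduction to Sylow subgroups: for a flasque $G$-lattice $F$, the class $[F]$ is invertible if and only if the restriction $\mathrm{Res}^G_{G_p}F$ is invertible over $G_p$ for every prime $p$. Indeed, $F$ is invertible precisely when, in addition to being flasque, it satisfies $H^1(S,F)=0$ for all subgroups $S\le G$ (coflasqueness). A restriction–corestriction argument shows that the $p$-primary part of $H^1(S,F)$ injects into $H^1(S_p,F)$ for a Sylow $p$-subgroup $S_p\le S$, so this vanishing for all $S$ is equivalent to the vanishing of $\hat H^1(P,F)$ for all $p$-subgroups $P$ and every $p$; as every $p$-subgroup is conjugate into $G_p$, this is exactly coflasqueness of $\mathrm{Res}^G_{G_p}F$ for all $p$. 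Since restriction carries permutation lattices to permutation lattices and flasque lattices to flasque lattices, $\mathrm{Res}^G_{G_p}\rho_G(J_{G/H})=\rho_{G_p}(\mathrm{Res}^G_{G_p}J_{G/H})$, and I am reduced to deciding, prime by prime, when $\rho_{G_p}(\mathrm{Res}^G_{G_p}J_{G/H})$ is invertible.

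Next I would split according to whether $p$ divides $[G:H]$, using the Hall hypothesis $\gcd(|H|,[G:H])=1$. If $p\nmid[G:H]$, then $|H|$ contains the full $p$-part of $|G|$, so a suitable conjugate of $G_p$ lies in $H$ and the $G_p$-set $G/H$ has a fixed point $gH$. Projecting $\bbz[G/H]$ onto the trivial summand $\bbz\cdot gH$ gives a $G_p$-equivariant retraction of $\nu$ (since $\nu(1)$ has coefficient $1$ on $gH$), so the displayed sequence splits over $G_p$ and $\mathrm{Res}^G_{G_p}J_{G/H}$ is a direct summand of a permutation lattice, hence invertible; such primes impose no condition. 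If instead $p\mid[G:H]$, then $p\nmid|H|$, so $G_p\cap gHg^{-1}=1$ for every $g$ and Mackey's formula gives $\mathrm{Res}^G_{G_p}\bbz[G/H]\cong\bbz[G_p]^{\oplus r}$ with $r=|G_p\backslash G/H|$, under which $\nu(1)$ becomes $(N,\dots,N)$ for the norm element $N=\sum_{x\in G_p}x$. Applying an element of $GL_r(\bbz)$ carrying the unimodular vector $(1,\dots,1)$ to $(1,0,\dots,0)$ transforms the sequence into $0\to\bbz\xrightarrow{N}\bbz[G_p]\to J_{G_p}\to0$ together with a free complement, so $\mathrm{Res}^G_{G_p}J_{G/H}\cong J_{G_p}\oplus\bbz[G_p]^{\oplus(r-1)}$ and therefore $\rho_{G_p}(\mathrm{Res}^G_{G_p}J_{G/H})=\rho_{G_p}(J_{G_p})$.

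Finally, applying the Galois case (the cited Endo–Miyata/Saltman theorem) to the $p$-group $G_p$, for which ``all Sylow subgroups cyclic'' means simply ``$G_p$ cyclic'', the class $\rho_{G_p}(J_{G_p})$ is invertible if and only if $G_p$ is cyclic. Combining the two cases, $\rho_G(J_{G/H})$ is invertible if and only if $G_p$ is cyclic for every $p\mid[G:H]$, which is the assertion. I expect the main obstacle to be the Sylow-localization of invertibility for flasque lattices in the first step, since it rests on the characterization of invertible lattices as those that are simultaneously flasque and coflasque, combined with careful restriction–corestriction bookkeeping; the integral splitting in the two cases is then comparatively elementary, the only delicate point being the unimodular change of basis used to reduce the multi-orbit restriction to the single Galois norm lattice $J_{G_p}$.
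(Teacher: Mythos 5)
Your proof is correct in its overall architecture and in the two-case lattice analysis, but it has a genuine gap at the one step that carries all the weight: the claim that a $G$-lattice is invertible \emph{precisely when} it is flasque and coflasque. Only the implication ``invertible $\Rightarrow$ flasque and coflasque'' is true; the converse fails, and it fails exactly in the family of lattices relevant here. Take $G_p=Q_8$. By the Endo--Miyata theorem you invoke at the end, $\rho_{Q_8}(J_{Q_8})$ is \emph{not} invertible, since $Q_8$ is not cyclic. Yet it is flasque (by construction) and coflasque: from a flasque resolution $0\to J_{Q_8}\to P\to F\to 0$ one gets $H^1(S,F)\hookrightarrow H^2(S,J_{Q_8})\cong H^3(S,\bbz)$ for every $S\leq Q_8$ (permutation lattices are coflasque, and $\bbz[Q_8]$ is cohomologically trivial over $S$), and $H^3(S,\bbz)=0$ for all $S$ --- for proper (cyclic) $S$ by periodicity, and for $S=Q_8$ because its Schur multiplier $H^2(Q_8,\mathbb{Q}/\mathbb{Z})$ is trivial. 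So coflasqueness does not detect invertibility, and your Sylow reduction (in particular the passage from ``$\mathrm{Res}^G_{G_p}F$ flasque and coflasque for all $p$'' to ``$[F]$ invertible'') is unproven; note that this is exactly the hard, ``if'' direction of the proposition. The failure is not hypothetical: for $G=\mathrm{SL}(2,3)$ and $H$ a (nonnormal, Hall) Sylow $3$-subgroup, one has $\mathrm{Res}^G_{Q_8}J_{G/H}\cong J_{Q_8}$, so your criterion would declare $\rho_G(J_{G/H})$ invertible and the norm one torus retract rational, contradicting the conclusion that your own case analysis via Endo--Miyata gives (it is not retract rational, since $Q_8$ is not cyclic).

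The reduction you want is nevertheless a true theorem, and the fix is to replace the flasque-plus-coflasque argument by a citation or by the correct mechanism: it is exactly Lemma \ref{lemma22} combined with the fact that $[M]$ is invertible if and only if it is $p$-invertible for every prime \cite{Sca20}*{Lemma 2.3 (ii)}; classically it also follows from the characterization that a lattice $M$ is invertible if and only if $\mathrm{Ext}^1_{\bbz[G]}(M,C)=0$ for every coflasque $C$ \cite{CTS77}, to which your restriction--corestriction bookkeeping does legitimately apply. With that substitution, the remainder of your argument --- the fixed-point splitting of the norm sequence when $p\nmid[G:H]$, the Mackey decomposition and unimodular change of basis giving $\mathrm{Res}^G_{G_p}J_{G/H}\cong J_{G_p}\oplus\bbz[G_p]^{(r-1)}$ when $p\mid[G:H]$, and the appeal to \cite{EM75} for $p$-groups --- is correct, and it is essentially the same two-case analysis that the paper itself carries out in Proposition \ref{coprime} and the proposition following it (using Scavia's $p$-invertibility machinery in place of your step); the paper does not reprove the present statement but quotes it from \cite{Endo11}*{Proposition 1.7}.
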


\section{\texorpdfstring{$p$-invertible lattices}{p-invertible lattices}}\label{pinv}
Let $p$ be a prime, $G$ a finite group and 
$M$ a $G$-lattice.
We set $M_{(p)}: = M \otimes _\bbz \bbz_{(p)}$, where $\bbz_{(p)}$ is the localization of $\bbz$ at the prime ideal $(p)$.
We say that $M$ is $p$-{\it invertible} if there exists a permutation $G$-lattice $P$ such that $M_{(p)}$ is a direct summand of $P_{(p)}$.
It is clear that if $M$ is invertible, then it is $p$-invertible for every prime $p$.
We say that the similarity class $[M]$ of $M$ is $p$-{\it invertible} if some $E \in [M]$ is a $p$-invertible $G$-lattice, and in this case 
$E$ is $p$-invertible for any $E \in [M]$ \cite{Sca20}*{Lemma 2.3 (iv)}.
The class $[M]$ is invertible if and only if $[M]$ is $p$-invertible for every prime $p$ \cite{Sca20}*{Lemma 2.3 (ii)}.

\begin{lem}\label{lemma21}
Let $p$ be a prime,
$G$ a finite group and $H\leq G$ a subgroup.
Let $M$ be a $G$-lattice
and we view $M$ as an $H$-lattice by restriction.
If $[M]\in S_G$ is $p$-invertible, then $[M]\in S_H$ is also $p$-invertible.
In particular, if $\rho_G(M)$ is $p$-invertible, then $\rho_H(M)$ is also $p$-invertible.
\end{lem}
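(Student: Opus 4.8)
The plan is to reduce both assertions to the behavior of permutation and flasque lattices under restriction, using that each of these properties is inherited by $\operatorname{Res}^G_H$. First I would unwind the definition of $p$-invertibility for similarity classes. Since $[M]\in S_G$ is $p$-invertible, the cited \cite{Sca20}*{Lemma 2.3 (iv)} shows that \emph{every} representative, in particular $M$ itself, is a $p$-invertible $G$-lattice. So it suffices to prove the statement at the level of lattices: if $M$ is a $p$-invertible $G$-lattice, then $\operatorname{Res}^G_H M$ is a $p$-invertible $H$-lattice. Once this is established, $M$ is a representative of $[M]\in S_H$ that is $p$-invertible, so $[M]\in S_H$ is $p$-invertible by definition.

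Next I would record the two elementary facts driving the argument. The first is that restriction sends permutation lattices to permutation lattices: by the double-coset (Mackey) decomposition, $\operatorname{Res}^G_H \bbz[G/K] \cong \bigoplus_{HgK \in H\backslash G/K} \bbz[H/(H \cap gKg^{-1})]$ for any subgroup $K \leq G$, so $\operatorname{Res}^G_H P$ is permutation whenever $P$ is. The second is that localization at $p$ commutes with restriction, i.e. $(\operatorname{Res}^G_H M)_{(p)} = \operatorname{Res}^G_H(M_{(p)})$, since the underlying $\bbz_{(p)}$-module and the $H$-action are unchanged. Now I would choose a permutation $G$-lattice $P$ with $M_{(p)}$ a direct summand of $P_{(p)}$; applying $\operatorname{Res}^G_H$, which is additive and hence preserves direct summands, and combining the two facts shows that $(\operatorname{Res}^G_H M)_{(p)}$ is a direct summand of $(\operatorname{Res}^G_H P)_{(p)}$ with $\operatorname{Res}^G_H P$ permutation. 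This is exactly the assertion that $\operatorname{Res}^G_H M$ is $p$-invertible.

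For the \textit{in particular} clause I would pass through a flasque resolution $0 \to M \to P \to F \to 0$ with $P$ permutation and $[F] = \rho_G(M)$. The key observation is that restriction preserves flasqueness: if $\hat{H}^{-1}(H', F) = 0$ for every subgroup $H' \leq G$, then in particular this holds for every $H' \leq H$, so $\operatorname{Res}^G_H F$ is flasque as an $H$-lattice. Since $\operatorname{Res}^G_H P$ is permutation by the Mackey decomposition and restriction is exact, the restricted sequence $0 \to M \to \operatorname{Res}^G_H P \to \operatorname{Res}^G_H F \to 0$ is a flasque resolution of $M$ as an $H$-lattice, whence $\rho_H(M) = [\operatorname{Res}^G_H F] \in S_H$ by the uniqueness of the flasque class \cite{CTS77}*{Lemme 5}. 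Applying the first part of the lemma to $F$, whose class $[F]=\rho_G(M)$ is $p$-invertible by hypothesis, yields that $[\operatorname{Res}^G_H F]\in S_H$ is $p$-invertible, that is, $\rho_H(M)$ is $p$-invertible.

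I expect the only genuine subtlety to be bookkeeping rather than difficulty: one must verify that the restricted short exact sequence really is a flasque resolution, so that it computes $\rho_H$, and this hinges on the two inheritance properties above together with the uniqueness of the flasque class. The Mackey decomposition and the exactness of restriction are standard, so the main point to state carefully is the identification $\rho_H(M) = [\operatorname{Res}^G_H F]$.
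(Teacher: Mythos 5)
Your proof is correct and follows essentially the same route as the paper: the paper's two-line proof rests on exactly the two facts you establish, namely that restriction of scalars sends permutation $G$-lattices to permutation $H$-lattices and that $\rho_G$, $\rho_H$ are compatible with the restriction map $S_G \to S_H$ (which the paper cites from \cite{CTS77}*{Remarque 2} rather than proving). Your write-up simply fills in the details behind that citation, by checking that restriction is exact, preserves flasqueness, and hence carries a flasque resolution of $M$ over $G$ to one over $H$, so that $\rho_H(M)=[\operatorname{Res}^G_H F]$.
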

\begin{proof}
A permutation $G$-lattice is a permutation $H$-lattice by restrection of scalars
and the maps $\rho_G$ and $\rho_H$ are compatible with the restriction map $S_G \to S_H$ (see \cite{CTS77}*{Remarque 2}).
\end{proof}

\begin{lem}\label{lemma22}
Let $p$ be a prime.
Let $G$ be a finite group,
$G_p$ a Sylow $p$-subgroup of $G$ 
and $M$ a $G$-lattice.
The following three conditions are equivalent:
\begin{enumerate}
\item $[M]\in S_G$ is $p$-invertible;
\item $[M]\in S_{G_p}$ is $p$-invertible;
\item $[M]\in S_{G_p}$ is invertible.
\end{enumerate}
In particular, the following three conditions are equivalent:
\begin{enumerate}
\item[iv)] $\rho_G(M)$ is $p$-invertible;
\item[v)] $\rho_{G_p}(M)$ is $p$-invertible;
\item[vi)] $\rho_{G_p}(M)$ is invertible.
\end{enumerate}
\end{lem}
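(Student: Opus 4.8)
The plan is to prove the equivalence of the three conditions (i)--(iii), after which the equivalence of (iv)--(vi) follows immediately by applying the result to a flasque lattice $F$ with $[F]=\rho_G(M)$, using additivity of $\rho$ together with the compatibility of flasque resolutions under restriction recorded in Lemma \ref{lemma21}. For the main three-way equivalence, I would first dispatch the two easy directions. The implication (i) $\Rightarrow$ (ii) is exactly the content of Lemma \ref{lemma21} applied with $H=G_p$. The implication (iii) $\Rightarrow$ (ii) is also immediate, since an invertible class is $p$-invertible (a direct summand of a permutation lattice remains a direct summand after $-\otimes_{\mathbb{Z}}\mathbb{Z}_{(p)}$). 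So the two substantive implications to establish are (ii) $\Rightarrow$ (iii), which takes place entirely over the $p$-group $G_p$, and (ii) $\Rightarrow$ (i), which is a genuine transfer-type argument moving information from the Sylow subgroup back up to $G$.

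For (ii) $\Rightarrow$ (iii) I would argue that over a $p$-group $G_p$, a $G_p$-lattice that is $p$-invertible is already invertible. The key point is that for a $p$-group, localization at $p$ does not lose any splitting information relevant to the similarity monoid: I expect to show that if $M_{(p)}$ is a direct summand of $P_{(p)}$ for some permutation $G_p$-lattice $P$, then one can clear denominators and use that the index $[G_p:H]$ of any subgroup $H$ is a power of $p$ (hence a unit in $\mathbb{Z}_{(p')}$ but, more usefully, the relevant obstruction lives $p$-locally) to conclude $[M]$ is invertible integrally. Concretely, I would take a splitting $P_{(p)} \cong M_{(p)} \oplus N$ over $\mathbb{Z}_{(p)}[G_p]$ and lift the idempotent, or equivalently use that the functor $-\otimes_{\mathbb{Z}}\mathbb{Z}_{(p)}$ reflects invertibility of classes over a $p$-group. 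This is essentially a statement about $\mathbb{Z}_{(p)}[G_p]$ versus $\mathbb{Z}[G_p]$ and should be citable or reducible to the characterization of invertible lattices via vanishing of the relevant flasque/coflasque obstruction, which is a $p$-primary group here because $G_p$ is a $p$-group.

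For (ii) $\Rightarrow$ (i), the idea is a standard restriction--corestriction (transfer) argument. The composite $S_G \xrightarrow{\text{res}} S_{G_p} \xrightarrow{\text{cores}} S_G$ acts on a class as multiplication by the index $[G:G_p]$, which is prime to $p$ and hence a unit in $\mathbb{Z}_{(p)}$. Thus if $[M]\in S_{G_p}$ is $p$-invertible (equivalently, $M_{(p)}$ is a summand of a permutation $\mathbb{Z}_{(p)}[G_p]$-lattice), then inducing back up shows $[G:G_p]\cdot[M]$ is $p$-invertible in $S_G$, and inverting $[G:G_p]$ $p$-locally gives that $[M]$ itself is $p$-invertible in $S_G$. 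I expect this to be the main obstacle, since one must be careful that the induced lattice $\operatorname{Ind}_{G_p}^G$ of a permutation $G_p$-lattice is again a permutation $G$-lattice (which it is, by transitivity of induction on cosets), and that the transfer relation holds at the level of the monoid $S_G$ after localizing at $p$. Once the index $[G:G_p]$ is inverted, the direct-summand witnessing $p$-invertibility over $G_p$ transports to a $p$-local direct-summand statement over $G$, completing the cycle of implications.
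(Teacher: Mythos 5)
The paper itself does not reprove this lemma: its entire proof is the one-line observation that the statement is a restatement of Lemma~2.5 of \cite{Sca20} in terms of similarity classes. Your proposal instead attempts a self-contained argument, and while its architecture is sensible and the two easy implications are fine ((i)~$\Rightarrow$~(ii) is Lemma~\ref{lemma21}, and (iii)~$\Rightarrow$~(ii) is trivial), both substantive implications have problems. Your (ii)~$\Rightarrow$~(i) rests on a false identity: for lattice classes, $\operatorname{cores}(\operatorname{res}[M])$ is the class of $\operatorname{Ind}_{G_p}^{G}\operatorname{Res}_{G_p}M \cong \bbz[G/G_p]\otimes_{\bbz}M$ with diagonal $G$-action, which is \emph{not} $[G:G_p]\cdot[M]$; the formula ``$\operatorname{cores}\circ\operatorname{res}$ equals multiplication by the index'' holds in Tate cohomology, not in $S_G$, and since $S_G$ is merely a commutative monoid, ``inverting $[G:G_p]$ $p$-locally'' inside it has no meaning. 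This direction is repairable by a standard argument you gesture at but do not state: the counit $\varepsilon\colon \operatorname{Ind}_{G_p}^{G}\operatorname{Res}_{G_p}M\to M$ acquires a $G$-equivariant section over $\bbz_{(p)}$, namely $m\mapsto [G:G_p]^{-1}\sum_{gG_p\in G/G_p}g\otimes g^{-1}m$, so $M_{(p)}$ is a direct summand of $(\operatorname{Ind}_{G_p}^{G}\operatorname{Res}_{G_p}M)_{(p)}$; since induction carries permutation $G_p$-lattices to permutation $G$-lattices and preserves direct summands, $p$-invertibility over $G_p$ then transfers up to $G$.

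The genuine gap is (ii)~$\Rightarrow$~(iii), which is exactly the nontrivial content of Lemma~2.5 of \cite{Sca20}, and none of your proposed justifications carries it. Naive idempotent lifting fails: a projection of $P_{(p)}$ onto $M_{(p)}$ only yields, after clearing denominators, an endomorphism $f$ of $P$ with $f^{2}=cf$ for some integer $c$ prime to $p$, and such an $f$ does not split $P$ integrally. The assertion that $-\otimes_{\bbz}\bbz_{(p)}$ ``reflects invertibility of classes over a $p$-group'' is not an available fact --- it is verbatim the statement to be proved. And the proposed reduction to ``the characterization of invertible lattices via vanishing of the relevant flasque/coflasque obstruction'' is wrong: no such characterization exists; flasqueness is necessary but far from sufficient for invertibility, and indeed the existence of flasque non-invertible lattices over non-cyclic $p$-groups (via Theorem~1 of \cite{Endo01} and Theorem~1.5 of \cite{EM75}) is precisely what drives the non-invertibility results of Section~\ref{mainsec} of this paper. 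A correct proof of (ii)~$\Rightarrow$~(iii) requires actual integral representation theory: Krull--Schmidt/genus arguments over the $p$-local or $p$-adic group ring of the $p$-group $G_p$, the fact that $\bbz_{q}[G_p]$ is a maximal order for $q\neq p$ (so that summand relations there are detected by rational spans), and a Roiter-type replacement to pass from the genus back to the similarity class of $M$ itself. Since none of this is sketched, your proof does not close without citing the very result being proved --- which is, in effect, what the paper does.
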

\begin{proof}
This is a restatement of \cite{Sca20}*{Lemma 2.5} in terms of the similarity classes.
\end{proof}

For a torus $T$ over a field $k$ which splits over a finite Galois extension $L$ of $k$ with the Galois group $G=\gal(L/k)$, 
$T$ is $p$-retract rational over $k$ if and only if the flasque class $\rho_G(\hat{T})$ is $p$-invertible \cite{Sca20}*{Proposition 3.1}.
This shows that the torus $T$ is retract rational over $k$ if and only if $T$ is $p$-retract rational over $k$ for any prime $p$ \cite{Sca20}*{Theorem 1.1}.

Let $k$ be a field and $K$ a finite separable field extension of $k$.
If $K/k$ is Galois, then 
the norm one torus $R^{(1)}_{K/k} \bbg_{m}$ associated to $K/k$ is $p$-retract rational over $k$ if and only if
a Sylow $p$-subgroup of $\gal(K/k)$ is cyclic \cite{Sca20}*{Proposition 5.1}.
When $K/k$ is non-Galois,
let $L/k$ be the Galois closure of $K/k$ with $G=\gal(L/k)$ and $H=\gal(L/K)$.
Recall that the character module of
the norm one torus 
$R^{(1)}_{K/k} \bbg_m$ associated to $K/k$
is isomorphic to $J_{G/H}$. 

\begin{prop}\label{coprime}
Let $G$ be a finite group, $H \leq G$ a subgroup of $G$ and $p$ a prime.
Assume that $p \not| \ [G:H]$ or a Sylow $p$-subgroup of $G$ is cyclic.
Then $\rho_G(J_{G/H})$ is $p$-invertible. 
\end{prop}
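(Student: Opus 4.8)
The plan is to reduce everything to a Sylow $p$-subgroup via Lemma \ref{lemma22} and then split into the two hypotheses. Since conditions (iv) and (vi) of Lemma \ref{lemma22} are equivalent, it suffices to prove that $\rho_{G_p}(J_{G/H})$ is invertible, where $G_p$ is a Sylow $p$-subgroup of $G$ and $J_{G/H}$ is viewed as a $G_p$-lattice by restriction. The starting point is the exact sequence of $G$-lattices obtained by dualizing the augmentation sequence $0 \to \ker \epsilon_{G/H} \to \bbz[G/H] \xrightarrow{\epsilon_{G/H}} \bbz \to 0$ and identifying the permutation lattice $\bbz[G/H]$ with its own dual, namely
\[ 0 \to \bbz \xrightarrow{\,d\,} \bbz[G/H] \to J_{G/H} \to 0, \qquad d(1)=\sum_{gH \in G/H} gH . \]

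In the case $p \nmid [G:H]$ I would argue as follows. Since $p$ does not divide $[G:H]$, a Sylow $p$-subgroup of $H$ is a Sylow $p$-subgroup of $G$, so after replacing $H$ by a conjugate (which changes neither $\bbz[G/H]$ nor $J_{G/H}$ up to isomorphism) I may assume $G_p \subseteq H$. Restricting the displayed sequence to $G_p$ and applying the Mackey decomposition $\bbz[G/H]|_{G_p} \cong \bigoplus_{s} \bbz[G_p/(G_p \cap sHs^{-1})]$, the double coset of the identity contributes a trivial direct summand $\bbz\, e_H$, because $G_p \subseteq H$ fixes the coset $H$. Writing $d(1)=e_H+w$ with $w:=\sum_{gH \neq H} gH$, the element $w$ is $G_p$-fixed (as is $d(1)$, being $G$-invariant) and lies in the complementary permutation summand; hence the unipotent map acting by $e_H \mapsto e_H+w$ and by the identity on the complement is a $G_p$-automorphism of $\bbz[G/H]|_{G_p}$ carrying $\bbz\, e_H$ onto $\operatorname{im}(d)$. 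Consequently $J_{G/H}|_{G_p} \cong \bbz[G/H]|_{G_p}/\operatorname{im}(d)$ is isomorphic to the complementary permutation lattice, so $\rho_{G_p}(J_{G/H})=0$, which is in particular invertible.

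In the case where $G_p$ is cyclic I would invoke the classical Endo--Miyata result that over a cyclic group every flasque lattice is invertible; applied to a flasque resolution of $J_{G/H}$ over $G_p$ this gives that $\rho_{G_p}(J_{G/H})$ is invertible (indeed $\rho_{G_p}(M)$ is invertible for every $G_p$-lattice $M$). In either case $\rho_{G_p}(J_{G/H})$ is invertible, so by the equivalence (vi) $\Rightarrow$ (iv) of Lemma \ref{lemma22} the class $\rho_G(J_{G/H})$ is $p$-invertible, as desired.

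I expect the main obstacle to be the splitting computation in the first case: one must check carefully that $w=d(1)-e_H$ is genuinely $G_p$-fixed and lands in the complement of the trivial summand, so that the copy of $\bbz$ cut out by the diagonal becomes a direct permutation summand and the restricted quotient is a permutation lattice. The second case is comparatively soft, resting entirely on the cited structural fact about cyclic groups, and the final assembly is purely formal through Lemma \ref{lemma22}.
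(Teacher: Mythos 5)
Your proof is correct and follows essentially the same route as the paper's: both reduce to a Sylow $p$-subgroup via Lemma \ref{lemma22}, handle the case $p \not| \ [G:H]$ by exploiting that such a Sylow subgroup lies in (a conjugate of) $H$ and hence fixes the coset $eH$ --- which is exactly what makes the augmentation sequence $0 \to I_{G/H} \to \bbz[G/H] \to \bbz \to 0$ split over it --- and settle the cyclic case by the known good behaviour of flasque lattices over groups with cyclic Sylow $p$-subgroups. The differences are only in execution: the paper splits the augmentation sequence directly and concludes that $J_{G/H}$ is invertible over the Sylow subgroup, whereas your dual-side unipotent-automorphism computation gives the marginally stronger conclusion that the restriction is a permutation lattice (so $\rho_{G_p}(J_{G/H})=0$); and in the cyclic case the paper quotes \cite{Sca20}*{Proposition 5.1} at the level of $G$ rather than the classical Endo--Miyata statement over $G_p$, which amounts to the same thing.
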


\begin{proof}
Let $P$ be a Sylow $p$-subgroup of $H$.
Then the short exact sequence
\[ 0 \to I_{G/H} \to \bbz[G/H] \overset{\epsilon_{G/H}}{\longrightarrow} \bbz \to 0 \]
splits as $P$-lattices, so that $J_{G/H}$ is invertible as a $P$-lattice.
If $p \not| \ [G:H]$, then since $P$ is a Sylow $p$-subgroup of $G$, we see that $\rho_G(J_{G/H})$ is $p$-invertible by Lemma \ref{lemma22}.
If a Sylow $p$-subgroup of $G$ is cyclic, then
every flasque $G$-lattice is $p$-invertible \cite{Sca20}*{Proposition 5.1}.
In particular, the flasque class $\rho_G(J_{G/H})$
is $p$-invertible.
\end{proof}

\begin{prop}
Let $G$ be a finite group, $H \leq G$ a subgroup of $G$ and $p$ a prime.
Assume that $p \mid [G:H]$ and $H \leq G$ is a Hall subgroup.
If $\rho_G(J_{G/H})$ is $p$-invertible,
then a Sylow $p$-subgroup of G is cyclic. 
\end{prop}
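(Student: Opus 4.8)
The plan is to pass to a Sylow $p$-subgroup $G_p$ of $G$ and to use the Hall hypothesis to trivialize the restriction of $\bbz[G/H]$. By Lemma \ref{lemma22}, the class $\rho_G(J_{G/H})$ is $p$-invertible if and only if $\rho_{G_p}(J_{G/H})$ is invertible, where $J_{G/H}$ is viewed as a $G_p$-lattice by restriction; so it suffices to show that invertibility of $\rho_{G_p}(J_{G/H})$ forces $G_p$ to be cyclic. First I would restrict $\bbz[G/H]$ to $G_p$. Since $H$ is a Hall subgroup and $p \mid [G:H]$, we have $p \nmid |H|$, so for every $g \in G$ the orders of $G_p$ and of $gHg^{-1}$ are coprime and hence $G_p \cap gHg^{-1} = 1$. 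Thus every $G_p$-orbit on $G/H$ is free, and $\bbz[G/H]$ restricts to a free $\bbz[G_p]$-module $\bbz[G_p]^{\oplus m}$, where $m = [G:H]/|G_p|$ is the number of double cosets in $G_p \backslash G / H$.

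Next I would identify $J_{G/H}$ as a $G_p$-lattice. Dualizing the augmentation sequence yields $0 \to \bbz \to \bbz[G/H] \to J_{G/H} \to 0$, in which $1 \in \bbz$ is sent to the sum of all the cosets in $G/H$. Restricting to $G_p$ and using the previous step, the image of $1$ becomes $(N, \dots, N) \in \bbz[G_p]^{\oplus m}$ with $N = \sum_{x \in G_p} x$ the norm element. The key point is that the unipotent $\bbz[G_p]$-automorphism $(a_1, \dots, a_m) \mapsto (a_1, a_2 - a_1, \dots, a_m - a_1)$ of $\bbz[G_p]^{\oplus m}$ carries $(N, \dots, N)$ to $(N, 0, \dots, 0)$, and therefore induces an isomorphism of $G_p$-lattices
\[ J_{G/H} \cong J_{G_p/1} \oplus \bbz[G_p]^{\oplus (m-1)}, \]
since $J_{G_p/1} = \bbz[G_p]/\bbz N$. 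As $\bbz[G_p]^{\oplus (m-1)}$ is permutation and $\rho_{G_p}$ is additive and kills permutation lattices, this gives $\rho_{G_p}(J_{G/H}) = \rho_{G_p}(J_{G_p/1})$. I expect the main obstacle to lie precisely in this identification: one has to verify that the restriction to $G_p$ is genuinely free --- this is exactly where the Hall hypothesis enters, for otherwise the stabilizers $G_p \cap gHg^{-1}$ need not be trivial --- and that the explicit change of basis descends to the claimed splitting of $J_{G/H}$ as a $G_p$-lattice.

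Finally, $J_{G_p/1}$ is precisely the character lattice of the norm one torus attached to a Galois extension with group $G_p$, so by the theorem of Endo--Miyata \cite{EM75} its flasque class $\rho_{G_p}(J_{G_p/1})$ is invertible if and only if every Sylow subgroup of $G_p$ is cyclic, that is, since $G_p$ is a $p$-group, if and only if $G_p$ is cyclic. Combined with the reduction of the first paragraph, $p$-invertibility of $\rho_G(J_{G/H})$ forces $G_p$ to be cyclic, which is the assertion.
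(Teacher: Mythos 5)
Your proof is correct and follows essentially the same route as the paper's: restrict to a Sylow $p$-subgroup $G_p$, use the Hall hypothesis to see that $\bbz[G/H]$ restricts to a free $\bbz[G_p]$-module, split off $J_{G_p}$, and conclude via Lemma \ref{lemma22} and \cite{EM75}*{Theorem 1.5}. The only difference is that the paper obtains the splitting $J_{G/H} \cong J_{G_p} \oplus \bbz[G_p]^{\oplus (m-1)}$ by citing \cite{Endo11}*{Corollary 1.4}, whereas you prove it directly via the explicit unipotent change of basis sending $(N,\dots,N)$ to $(N,0,\dots,0)$; that verification is sound.
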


\begin{proof}
Let $P$ be a Sylow $p$-subgroup of $G$.
Then we have $\bbz [G/H] \cong \bbz [P]^{(t)}$ as $P$-lattices for some $t \geq 1$. 
Hence, by \cite{Endo11}*{Corollary 1.4}, $J_{G/H} \cong J_P \oplus \bbz[P]^{(t-1)}$
as $P$-lattices.
 
If $\rho_G(J_{G/H})$ is $p$-invertible, then 
$\rho_P(J_P)=\rho_P(J_{G/H})$ is invertible by Lemma \ref{lemma22}.
By \cite{EM75}*{Theorem 1.5}, $P$ is cyclic.
\end{proof}

\begin{rem}
Let $G$ be a finite group, $H \leq G$ a subgroup of $G$ and $p$ a prime.
Assume that $p \mid [G:H]$ and $H \leq G$ is not necessarily a Hall subgroup.
Using the results in \cite{hasegawa2025},
one should be able to describe a group-theoretic condition equivalent to the $p$-invertibility of $\rho_G(J_{G/H})$.
\end{rem}

\section{Proof of Theorem \ref{intromain}}\label{mainsec}
Let $S_n$ be the symmetric group on $n$ letters $\{1,2,\dots, n\}$.
We assume that the subgroup $S_{n-1}$ of $S_n$ is the stabilizer of the letter $n$ in $S_n$.

\begin{prop} \label{oddprimeS}
   Let $n$ be an integer and $p$ an odd prime.
   Assume that $n$ is not a prime and that $p$ divides $n$.
Then $\rho_{S_n}(J_{S_n / S_{n-1}})$ is not $p$-invertible.
\end{prop}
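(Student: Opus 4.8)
The plan is to descend from $S_n$ to a small non-cyclic $p$-subgroup on which the obstruction becomes the Endo--Miyata obstruction, while arranging an action \emph{without fixed points} so that the relevant norm relation cannot be trivialized. Combining Lemmas~\ref{lemma21} and~\ref{lemma22}, it suffices to exhibit one $p$-subgroup $P\le S_n$ for which $\rho_P(J_{S_n/S_{n-1}})$ is not invertible: were $\rho_{S_n}(J_{S_n/S_{n-1}})$ $p$-invertible, Lemma~\ref{lemma21} would make $\rho_P(J_{S_n/S_{n-1}})$ $p$-invertible, and Lemma~\ref{lemma22} applied to the $p$-group $P$ would upgrade this to invertibility. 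Since $n$ is composite with $p\mid n$, I write $n=pm$ with $m\ge 2$ and take $P\cong C_p\times C_p$ acting faithfully on $\{1,\dots,n\}$ with all $m$ orbits of length $p$ (assigning index-$p$ subgroups of $P$ as the point stabilizers, using at least two distinct ones for faithfulness). The two features I need are that $P$ is non-cyclic and that it has no fixed point.

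Restricting the permutation lattice along the orbit decomposition gives $\bbz[S_n/S_{n-1}]\big|_P\cong\bigoplus_{i=1}^{m}\bbz[P/P_i]$ with the $P_i$ the index-$p$ stabilizers, and the defining sequence $0\to\bbz\to\bbz[S_n/S_{n-1}]\to J_{S_n/S_{n-1}}\to 0$ yields
\[
J_{S_n/S_{n-1}}\big|_P\;\cong\;\Big(\bigoplus_{i=1}^{m}\bbz[P/P_i]\Big)\Big/\bbz\cdot\big(\textstyle\sum_i N_i\big),
\]
where $N_i$ is the norm (orbit-sum) of the $i$-th summand. Each $\bbz[P/P_i]$ is inflated from the cyclic quotient $P/P_i\cong C_p$, so each piece $J_{P/P_i}$ has trivial flasque class; the entire obstruction is carried by the single relation $\sum_i N_i$. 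This is precisely why I forbid fixed points: a length-$1$ orbit would contribute a trivial summand $\bbz e$ with $N=e$, and since $e$ occurs with coefficient $1$ in $\sum_i N_i$, one could absorb the relation into that summand and make the restriction a permutation lattice, destroying the obstruction. As every $N_i$ is a sum of $p$ basis vectors, no such absorption is possible and the relation is genuine.

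The decisive step is to prove $\rho_P\!\big(J_{S_n/S_{n-1}}|_P\big)$ is not invertible. The asymmetry I would exploit is that the lattice above carries an evident \emph{coflasque} resolution $0\to\bbz\to\bigoplus_i\bbz[P/P_i]\to J_{S_n/S_{n-1}}|_P\to 0$ (the kernel $\bbz$ is coflasque), so its coflasque class vanishes; but this says nothing about its \emph{flasque} class, which is what governs $p$-retract rationality. I would therefore build a flasque resolution $0\to J_{S_n/S_{n-1}}|_P\to Q\to F\to 0$ explicitly and show that $F$ fails to be coflasque, i.e.\ that $H^1(S,F)\neq 0$ for a suitable $S\le P$. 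Since invertible lattices are coflasque, this forces $[F]=\rho_P(J_{S_n/S_{n-1}}|_P)$ to be non-invertible. The comparison maps $\bbz[P]\twoheadrightarrow\bbz[P/P_i]$ send the regular norm $N_P$ to $|P_i|\,N_i=p\,N_i$, and it is this factor of $p$, forced by the index-$p$ stabilizers, that persists in the flasque class and blocks $p$-invertibility.

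I expect the construction of the flasque resolution and the cohomological detection of its non-coflasqueness to be the main obstacle, precisely because the individual orbit pieces are harmless and only their gluing is obstructed. As a guide, when $p^2\mid n$ one may instead choose $P$ acting \emph{freely} (all orbits of length $p^2$), whereupon \cite{Endo11} gives $J_{S_n/S_{n-1}}|_P\cong J_{P/1}\oplus(\text{permutation})$ and the non-invertibility follows at once from \cite{EM75} for the non-cyclic $P$; the genuinely hard case is thus $p^2\nmid n$, where the length-$p$ orbits prevent this clean reduction. The hypothesis that $p$ is odd is used to streamline this computation, the prime $p=2$ being treated in a separate argument.
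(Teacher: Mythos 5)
Your skeleton is the same as the paper's: restrict to a non-cyclic elementary abelian $p$-subgroup $P$ acting on $\{1,\dots,n\}$ without fixed points, so that $\bbz[S_n/S_{n-1}]$ restricts to $\bigoplus_{i=1}^m \bbz[P/P_i]$ with all $P_i$ proper, then conclude via Lemmas \ref{lemma21} and \ref{lemma22} once $\rho_P(J_{S_n/S_{n-1}})$ is known to be non-invertible (the paper takes $P\cong C_p^m$ generated by the $m$ disjoint $p$-cycles; your rank-two variant changes nothing essential, and your fixed-point-free observation is correct). Your side remark for $p^2\mid n$ (a freely acting $P\cong C_p\times C_p$, then \cite{Endo11}*{Corollary 1.4} and \cite{EM75}*{Theorem 1.5}) is also correct. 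But the decisive claim --- non-invertibility of $\rho_P(J_{S_n/S_{n-1}})$ when the orbits have length $p$ --- is exactly what you never prove: you only announce a plan (build a flasque resolution, detect non-coflasqueness of its cokernel by an $H^1$-computation) and you yourself flag it as ``the main obstacle.'' The paper closes precisely this hole with one citation, \cite{Endo01}*{Theorem 1}, which applies to the decomposition $\bigoplus_{i=1}^m\bbz[P/P_i]$ and yields non-invertibility for $m\geq 2$ when $p$ is odd. Since the case where $p$ exactly divides $n$ (e.g.\ $n=2p$) is not covered by your free-action argument, your proof is incomplete exactly where the proposition has content.

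Moreover, the missing step is not a routine verification, and your closing sentence --- that oddness of $p$ merely ``streamlines'' the computation --- shows the danger. For $p=2$ the identical configuration with two orbits of length $2$ has \emph{invertible} (indeed trivial) flasque class: the corresponding torus is the multinorm one torus attached to two distinct quadratic extensions, which is rational, since after inverting one coordinate it becomes the solution set of $N_1(x)=N_2(y)$, an open subset of an isotropic rank-$4$ affine quadric. This is precisely why Proposition \ref{evenS} of the paper must assume $n\geq 6$ (so that $n/2\geq 3$ orbits are available) and why $n=4$ is handled separately, via \cite{Endo11}*{Theorem 4.3}, in the proof of Theorem \ref{mainS}. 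So any completion of your sketch must use $p$ odd (or at least three orbits) at a specific, identifiable point; as written, your outline would equally ``prove'' a false statement for $p=2$ with two orbits. In short: right reduction, right subgroup, but the heart of the proposition --- Endo's theorem, or an actual proof of it in your configuration --- is absent.
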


\begin{proof}
   Set $m := n/p$.
   For each $1 \leq i \leq m$, define $\rho_i : = ( (i-1)p  +1 \ (i-1)p +2 \ \cdots \ ip) \in S_n$.
   Then $P:=\lan \rho_1, \rho_2, \dots, \rho_m \ran \subset S_n$ is an elementary abelian $p$-subgroup.
   Set 
   \[P_1 := \lan \rho_2, \rho_3, \dots, \rho_m \ran, \, P_2:= \lan \rho_1, \rho_3, \dots, \rho_m \ran, \, \dots, \,
   P_m:= \lan \rho_1, \rho_2, \dots, \rho_{m-1} \ran.\]
   Then, as $P$-lattices,
   \[ \bbz [S_n/ S_{n-1}] \cong \bbz [P/P_1] \oplus \bbz [P/P_2] \oplus \dots \oplus \bbz[P/P_m].\]
   Since $m \geq 2$, $\rho_P(J_{S_n/S_{n-1}})$ is not invertible 
   by \cite{Endo01}*{Theorem 1}.
   Hence $\rho_{S_n}(J_{S_n/S_{n-1}})$ is not $p$-invertible by Lemma \ref{lemma22}.
\end{proof}

\begin{prop}\label{evenS}
Let $n \geq 6$ be an even integer.
Then $\rho_{S_n}(J_{S_n / S_{n-1}})$ is not $2$-invertible.
\end{prop}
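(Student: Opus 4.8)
The plan is to imitate the proof of Proposition~\ref{oddprimeS} with the prime $2$ in place of the odd prime and transpositions in place of $p$-cycles, and then to pinpoint the single step where the prime $2$ forces the stronger hypothesis $n\geq 6$. Writing $n=2m$ with $m=n/2\geq 3$, I would set $\rho_i:=((2i-1)\ 2i)\in S_n$ for $1\leq i\leq m$ and $P:=\lan \rho_1,\dots,\rho_m\ran$, an elementary abelian $2$-subgroup of rank $m$. Its orbits on $\{1,\dots,n\}$ are the pairs $\{2i-1,2i\}$, and the stabilizer of a point of the $i$-th pair is $P_i:=\lan \rho_j : j\neq i\ran$. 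Exactly as in the odd case this gives, as $P$-lattices,
\[ \bbz[S_n/S_{n-1}]\cong\bigoplus_{i=1}^m \bbz[P/P_i]. \]

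By Lemma~\ref{lemma21} it is enough to prove that $\rho_P(J_{S_n/S_{n-1}})$ is not invertible: if $\rho_{S_n}(J_{S_n/S_{n-1}})$ were $2$-invertible, then so would be its restriction $\rho_P(J_{S_n/S_{n-1}})$, and since $P$ is a $2$-group, $2$-invertibility and invertibility coincide for $P$-lattices (Lemma~\ref{lemma22}). Thus non-invertibility of $\rho_P(J_{S_n/S_{n-1}})$ forces $\rho_{S_n}(J_{S_n/S_{n-1}})$ not to be $2$-invertible. The decisive step is then to feed the displayed decomposition into \cite{Endo01}*{Theorem~1} — the same input used in Proposition~\ref{oddprimeS} — which determines the invertibility of the flasque class of the lattice $J$ attached to a direct sum of coordinate quotients of an elementary abelian $p$-group. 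Applied with $p=2$ and $m\geq 3$, this should yield that $\rho_P(J_{S_n/S_{n-1}})$ is not invertible, completing the argument.

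The hard part is precisely the prime $2$ and the resulting numerology. For odd $p$ the analogous configuration is already non-invertible when $m\geq 2$, whereas at $p=2$ the borderline case $m=2$ — that is, $P\cong V_4$ arising from $n=4$ — behaves differently: there the flasque class is invertible, which is exactly why the hypothesis must be strengthened from $n\geq 4$ to $n\geq 6$, and why the quartic case $S_4$ (whose Sylow $2$-subgroup is the larger dihedral group of order $8$, not $V_4$) has to be treated by a separate argument. So the one point genuinely requiring care is to verify that \cite{Endo01}*{Theorem~1}, read at the prime $2$, really does return non-invertibility for every $m\geq 3$ while only failing at $m\leq 2$. If the cited theorem does not literally cover the prime $2$ in the form needed, the fallback would be a direct computation of the flasque class $\rho_P(J_{S_n/S_{n-1}})$ for $P\cong(\bbz/2)^m$, testing invertibility through the restrictions of $J_{S_n/S_{n-1}}$ to the subgroups of $P$ and tracking how non-invertibility first emerges at $m=3$.
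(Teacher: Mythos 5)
Your proof is correct and takes essentially the same route as the paper: the same elementary abelian subgroup $P$ generated by the disjoint transpositions $((2i-1)\ 2i)$, the same decomposition $\bbz[S_n/S_{n-1}]\cong\bigoplus_{i}\bbz[P/P_i]$, and the same appeal to \cite{Endo01}*{Theorem 1} (with $n/2\geq 3$) to conclude non-invertibility of $\rho_P(J_{S_n/S_{n-1}})$, hence non-$2$-invertibility over $S_n$. If anything, your explicit two-step citation --- Lemma~\ref{lemma21} to restrict to the non-Sylow subgroup $P$, then Lemma~\ref{lemma22} with $G=P$ to identify $2$-invertibility with invertibility --- is a more precise justification than the paper's single reference to Lemma~\ref{lemma22}, and your remark about the borderline case $m=2$ (i.e.\ $n=4$, handled separately via \cite{Endo11}*{Theorem 4.3}) matches the paper's treatment in Theorem~\ref{mainS}.
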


\begin{proof}
For each $1 \leq i \leq n/2$, define $\rho_i : = ( 2i-1 \ 2i) \in S_n$.
Then $P:=\lan \rho_1, \rho_2, \dots, \rho_{n/2} \ran$ is an elementary abelian $2$-subgroup.
Set 
\[P_1 := \lan \rho_2, \rho_3, \dots, \rho_{n/2} \ran, \,P_2:= \lan \rho_1, \rho_3, \dots, \rho_{n/2} \ran, \dots, \,
P_{n/2}:= \lan \rho_1, \rho_2, \dots, \rho_{n/2-1} \ran.\]
   Then, as $P$-lattices,
   \[ \bbz [S_n/ S_{n-1}] \cong \bbz [P/P_1] \oplus \bbz [P/P_2] \oplus \dots \oplus \bbz[P/P_{n/2}].\]
Since $n/2 \geq 3$, we see that $\rho_P(J_{S_n/S_{n-1}})$ is not invertible by \cite{Endo01}*{Theorem 1}.
Therefore $\rho_{S_n}(J_{S_n/S_{n-1}})$ is not $2$-invertible by Lemma \ref{lemma22}.
\end{proof}

\begin{thm}\label{mainS}
Let $n \geq 2$ be an integer, $S_n$ the symmetric group on $n$ letters, and $p$ a prime.
Then the flasque class $\rho_{S_n}(J_{S_n / S_{n-1}})$ is $p$-invertible
if and only if 
$n$ is a prime or $p$ is coprime to the composite $n$.
\end{thm}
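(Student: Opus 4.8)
The plan is to put $G=S_n$ and $H=S_{n-1}$, so that $[G:H]=n$ and $J_{S_n/S_{n-1}}$ is the lattice whose flasque class we must analyze, and to deduce both implications from the three propositions already established together with the restriction lemmas \ref{lemma21} and \ref{lemma22}. For the ``if'' direction, suppose $n$ is prime or $p\nmid n$. If $p\nmid n=[G:H]$, then Proposition \ref{coprime} directly gives that $\rho_{S_n}(J_{S_n/S_{n-1}})$ is $p$-invertible. The only remaining possibility is that $n$ is prime and $p\mid n$, which forces $p=n$; a Sylow $p$-subgroup of $S_p$ is generated by a single $p$-cycle, hence has order $p$ and is cyclic, so the second hypothesis of Proposition \ref{coprime} applies and again yields $p$-invertibility. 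Thus the flasque class is $p$-invertible whenever the stated condition holds.

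For the ``only if'' direction I would prove the contrapositive: assuming $n$ composite and $p\mid n$, I show that $\rho_{S_n}(J_{S_n/S_{n-1}})$ is not $p$-invertible. When $p$ is odd this is precisely Proposition \ref{oddprimeS}. When $p=2$, the integer $n$ is even and composite, hence $n\geq 4$, and for $n\geq 6$ the statement is Proposition \ref{evenS}. This leaves exactly one case, $n=4$ with $p=2$, which is the real obstacle: the elementary abelian subgroup generated by the disjoint transpositions $(12),(34)$ produces only two permutation summands, whereas the criterion of \cite{Endo01}*{Theorem 1} used in Propositions \ref{oddprimeS} and \ref{evenS} requires at least three, so that argument is unavailable.

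To settle $n=4$, $p=2$ I would restrict instead to the normal Klein four subgroup $V=\lan(12)(34),\,(13)(24)\ran\leq S_4$. Each nontrivial element of $V$ moves the letter $1$, so $V$ acts simply transitively on the four cosets $S_4/S_3=\{1,2,3,4\}$; hence $\bbz[S_4/S_3]$ restricts to the regular representation $\bbz[V]$, and, applying $(-)^\circ$ to the restriction of the augmentation sequence, $J_{S_4/S_3}|_V\cong J_{V/1}$ is the character lattice of the norm one torus of a Galois $V$-extension. Because $V$ is a $2$-group, Lemma \ref{lemma22} identifies $2$-invertibility with invertibility for $V$-lattices, and by \cite{EM75}*{Theorem 1.5} (equivalently, by the invertibility criterion for the flasque class) $\rho_V(J_{V/1})$ is invertible if and only if the Sylow $2$-subgroup of $V$ is cyclic. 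Since $V\cong(\bbz/2)^2$ this fails, so $\rho_V(J_{S_4/S_3})$ is not $2$-invertible, and Lemma \ref{lemma21} forces $\rho_{S_4}(J_{S_4/S_3})$ to be not $2$-invertible as well. Assembling the cases proves the theorem; the decisive and least routine step is the reduction for $n=4$, where the transposition subgroup fails and one must pass to the regularly acting Klein four group.
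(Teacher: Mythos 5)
Your proof is correct, but it departs from the paper's argument at the two points where the paper invokes \cite{Endo11}*{Theorem 4.3}, and in both places your route is more self-contained. For $n$ prime, the paper cites Endo's theorem to get invertibility of the flasque class outright, whereas you observe that the Sylow $p$-subgroup of $S_p$ has order $p$ (hence is cyclic) and apply the cyclic-Sylow clause of Proposition \ref{coprime}; this is a legitimate shortcut that stays inside the paper's own toolkit. For the residual case $n=4$, $p=2$, the paper argues indirectly: by Endo's theorem $\rho_{S_4}(J_{S_4/S_3})$ is not invertible, so by the equivalence between invertibility and $p$-invertibility for all $p$ (\cite{Sca20}*{Lemma 2.3 (ii)}) some prime must fail, and Proposition \ref{coprime} rules out every prime except $2$. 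You instead give a direct witness: restrict to the Klein four group $V=\lan(1\,2)(3\,4),(1\,3)(2\,4)\ran$, which acts regularly on the four cosets, so $J_{S_4/S_3}|_V\cong J_V$; since $V$ is a non-cyclic $2$-group, \cite{EM75}*{Theorem 1.5} shows $\rho_V(J_V)$ is not invertible, and Lemmas \ref{lemma21} and \ref{lemma22} transport this failure up to $S_4$. This is exactly the technique the paper itself uses for the alternating groups in Proposition \ref{evenA1} (where the same subgroup $V\leq A_4$ handles $4\mid n$), so your proof in effect unifies the symmetric case with the alternating one and removes the dependence on \cite{Endo11}*{Theorem 4.3} altogether; the trade-off is a slightly longer argument at $n=4$, against the paper's shorter but less constructive exclusion argument. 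Your diagnosis of why Proposition \ref{evenS} cannot cover $n=4$ (only two permutation summands arise from $\lan(1\,2),(3\,4)\ran$, too few for \cite{Endo01}*{Theorem 1} at $p=2$) matches the hypothesis $n/2\geq 3$ in the paper and correctly identifies the gap your $V$-argument fills.
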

\begin{proof}
If $n$ is a prime, then $\rho_{S_n}(J_{S_n / S_{n-1}})$ is invertible by \cite{Endo11}*{Theorem 4.3}. So $\rho_{S_n}(J_{S_n / S_{n-1}})$ is $p$-invertible for any prime $p$.

Assume that $n$ is not a prime.
For a prime $p$ coprime to $n$, $\rho_{S_n}(J_{S_n / S_{n-1}})$ is $p$-invertible by Proposition \ref{coprime}.
For an odd prime $p | n$, $\rho_{S_n}(J_{S_n / S_{n-1}})$ is not $p$-invertible by Proposition \ref{oddprimeS}.
If $n \geq 6$ is even, then $\rho_{S_n}(J_{S_n / S_{n-1}})$ is not $2$-invertible by Proposition \ref{evenS}. If $n=4$, then $\rho_{S_n}(J_{S_n / S_{n-1}})$ is not invertible by \cite{Endo11}*{Theorem 4.3}. Hence there exists a prime $p$ such that $\rho_{S_n}(J_{S_n / S_{n-1}})$ is not $p$-invertible. The only such prime is $p=2$ by Proposition \ref{coprime}.
\end{proof}



Let $A_n$ be the alternating group on $n$ letters $\{1,2,\dots, n \}$.
We assume that the subgroup $A_{n-1}$ of $A_n$ is the stabilizer of the letter $n$ in $A_n$.

\begin{prop}\label{oddprimeA}
Let $n$ be an integer and $p$ an odd prime.
Assume that $n$ is not a prime and that $p$ divides $n$.
Then $\rho_{A_n}(J_{A_n / A_{n-1}})$ is not $p$-invertible.
\end{prop}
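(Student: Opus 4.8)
The plan is to reproduce the argument of Proposition~\ref{oddprimeS} essentially verbatim, the one essential new observation being that since $p$ is \emph{odd}, every $p$-cycle is an even permutation and therefore already lies in $A_n$. Thus the very same elementary abelian $p$-subgroup used in the symmetric case is available inside $A_n$, and the lattice decomposition carries over unchanged.

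First I would set $m := n/p$ and note that $m \geq 2$: since $n$ is composite and $p \mid n$, the quotient $n/p$ cannot equal $1$. For each $1 \leq i \leq m$ define the $p$-cycle $\rho_i := ( (i-1)p+1 \ (i-1)p+2 \ \cdots \ ip )$. Because $p$ is odd, $\operatorname{sgn}(\rho_i) = (-1)^{p-1} = 1$, so $\rho_i \in A_n$, and hence $P := \lan \rho_1, \dots, \rho_m \ran$ is an elementary abelian $p$-subgroup of $A_n$. Setting $P_i := \lan \rho_j : j \neq i \ran$, I would identify $A_n/A_{n-1}$ with the natural $A_n$-set $\{1, \dots, n\}$ (valid since $A_{n-1}$ is the stabilizer of $n$ and $A_n$ acts transitively on $\{1,\dots,n\}$ for $n \geq 3$, which holds as $n = pm \geq 6$). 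A direct orbit analysis then shows that $\rho_i$ permutes the block $\{(i-1)p+1, \dots, ip\}$ cyclically while fixing every other letter, so the $m$ blocks are precisely the $P$-orbits and the stabilizer in $P$ of a point of the $i$-th block is exactly $P_i$. This gives the isomorphism of $P$-lattices
\[ \bbz[A_n/A_{n-1}] \cong \bbz[P/P_1] \oplus \bbz[P/P_2] \oplus \cdots \oplus \bbz[P/P_m]. \]

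With this decomposition in hand, since $m \geq 2$ I invoke \cite{Endo01}*{Theorem 1} to conclude that $\rho_P(J_{A_n/A_{n-1}})$ is not invertible. Finally, Lemma~\ref{lemma22} transfers this to the statement that $\rho_{A_n}(J_{A_n/A_{n-1}})$ is not $p$-invertible, which is exactly the claim.

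The only genuine content beyond Proposition~\ref{oddprimeS} is the parity check $\rho_i \in A_n$; everything downstream is formally identical. I therefore do not expect a real obstacle here. The point worth flagging is that this argument relies crucially on $p$ being odd: for $p = 2$ a transposition is an odd permutation, so the construction leaves $A_n$, which is why the even case will require a separate treatment (presumably via products of disjoint transpositions arranged to stay inside $A_n$).
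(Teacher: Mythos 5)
Your proof is correct and is essentially identical to the paper's own argument: the same elementary abelian subgroup $P = \lan \rho_1,\dots,\rho_m\ran$ built from disjoint $p$-cycles, the same decomposition $\bbz[A_n/A_{n-1}] \cong \bigoplus_i \bbz[P/P_i]$, the same appeal to \cite{Endo01}*{Theorem 1} and Lemma~\ref{lemma22}. The parity observation $\operatorname{sgn}(\rho_i)=(-1)^{p-1}=1$, which you correctly flag as the one point of substance beyond Proposition~\ref{oddprimeS}, is left implicit in the paper, so your write-up is if anything slightly more complete.
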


\begin{proof}
Set $m := n/p$.
For each $1 \leq i \leq m$, define $\rho_i : = ( (i-1)p  +1 \ (i-1)p +2 \ \cdots \ ip) \in A_n$.
Then $P:=\lan \rho_1, \rho_2, \dots, \rho_m \ran$ is an elementary abelian $p$-subgroup.
Set $P_1 := \lan \rho_2, \rho_3, \dots, \rho_m \ran, P_2:= \lan \rho_1, \rho_3, \dots, \rho_m \ran, \dots,
P_m:= \lan \rho_1, \rho_2, \dots, \rho_{m-1} \ran$.
Then, as $P$-lattices,
\[ \bbz [A_n/ A_{n-1}] \cong \bbz [P/P_1] \oplus \bbz [P/P_2] \oplus \dots \oplus \bbz[P/P_m].\]
Since $m \geq 2$, $\rho_{P}(J_{A_n/A_{n-1}})$ is not invertibley by \cite{Endo01}*{Theorem 1}.
Hence $\rho_{A_n}(J_{A_n/A_{n-1}})$ is not $p$-invertible by Lemma \ref{lemma22}.
\end{proof}

\begin{prop}\label{evenA1}
Let $n$ be an integer such that $2^2 | n$.
Then $\rho_{A_n}(J_{A_n / A_{n-1}})$ is not $2$-invertible.
\end{prop}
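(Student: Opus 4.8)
The plan is to follow the template of Propositions \ref{oddprimeA} and \ref{evenS}: I will exhibit an explicit elementary abelian $2$-subgroup $P \leq A_n$, decompose $\bbz[A_n/A_{n-1}]$ as a $P$-lattice into a direct sum of lattices $\bbz[P/P_i]$, conclude that $\rho_P(J_{A_n/A_{n-1}})$ is not invertible via \cite{Endo01}*{Theorem 1}, and then pass to $\rho_{A_n}(J_{A_n/A_{n-1}})$ by Lemma \ref{lemma22}. The feature special to $p=2$ is that a transposition is odd, so the generators $(2i-1\ 2i)$ used in Proposition \ref{evenS} do not lie in $A_n$; the hypothesis $2^2 \mid n$ is exactly what lets me replace them by even involutions while keeping a block structure whose orbits are transitive.

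Concretely, set $\ell := n/4$ and split $\{1,\dots,n\}$ into the blocks $B_i = \{4i-3,\,4i-2,\,4i-1,\,4i\}$ for $1 \leq i \leq \ell$. On $B_i$ let $V_i := \lan (4i-3\ 4i-2)(4i-1\ 4i),\ (4i-3\ 4i-1)(4i-2\ 4i)\ran \cong (\bbz/2)^2$; its nontrivial elements are double transpositions, hence even, so $V_i \subset A_n$. Put $P := V_1 \times \dots \times V_\ell$ and $P_i := \prod_{j\neq i} V_j$, so that $P \leq A_n$ is elementary abelian of order $2^{n/2}$. Because $V_i$ acts regularly on $B_i$ and fixes every other block pointwise, the $P$-orbits on $\{1,\dots,n\}$ are precisely $B_1,\dots,B_\ell$, with point stabiliser $P_i$ on $B_i$; hence, as $P$-lattices,
\[ \bbz[A_n/A_{n-1}] \cong \bigoplus_{i=1}^{\ell} \bbz[P/P_i]. \]
Restricting the defining sequence of $J_{A_n/A_{n-1}}$ to $P$, the flasque class $\rho_P(J_{A_n/A_{n-1}})$ is the one attached to the $P$-set $\bigsqcup_{i=1}^\ell P/P_i$, and \cite{Endo01}*{Theorem 1} gives that it is not invertible; by Lemma \ref{lemma22}, $\rho_{A_n}(J_{A_n/A_{n-1}})$ is not $2$-invertible. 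In the boundary case $n=4$ (so $\ell=1$) there is a single regular orbit: $P=V_1$ is a Klein four-group and a Sylow $2$-subgroup of $A_4$, the restriction of $J_{A_4/A_3}$ is $J_{V_1}$, and since $V_1$ is noncyclic $\rho_{V_1}(J_{V_1})$ is not invertible by \cite{EM75}*{Theorem 1.5}, which again gives the claim through Lemma \ref{lemma22}.

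The step I expect to be the main obstacle is the invocation of \cite{Endo01}*{Theorem 1}. In the symmetric-group propositions the blocks have prime size, so the point stabilisers are index-$p$ (hyperplane) subgroups of an elementary abelian group; here the Klein four-blocks instead yield orbits of size $4$ and index-$4$ stabilisers $P_i$, so I must verify that the invariant appearing in \cite{Endo01}*{Theorem 1} still detects noninvertibility for $\ell \geq 2$ independent regular Klein four-orbits (and that for $\ell=1$ it matches the classical computation of $J_{V_1}$). If \cite{Endo01}*{Theorem 1} turns out to be stated only for orbits of prime size, my fallback is to restrict $P$ further to $\lan \delta_1,\dots,\delta_\ell\ran \cong (\bbz/2)^\ell$, where $\delta_i$ is a fixed nontrivial element of $V_i$: this splits each $B_i$ into two orbits of size $2$ and returns a configuration of index-$2$ subgroups, at the price of tracking the summands that now occur with multiplicity.
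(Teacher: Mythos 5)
Your construction is sound as far as it goes: the $V_i$ are indeed even, $P=V_1\times\dots\times V_\ell\leq A_n$ is elementary abelian of order $2^{n/2}$, the blocks $B_i$ are the $P$-orbits with stabilizers $P_i$, the decomposition $\bbz[A_n/A_{n-1}]\cong\bigoplus_i\bbz[P/P_i]$ holds, the reduction logic via Lemmas \ref{lemma21} and \ref{lemma22} is valid, and your treatment of $n=4$ via \cite{EM75}*{Theorem 1.5} is correct. But the step you yourself flag as the main obstacle is a genuine gap, not a formality. Every use of \cite{Endo01}*{Theorem 1} in the paper (Propositions \ref{oddprimeS}, \ref{evenS}, \ref{evenA2}) is for orbits whose stabilizers have index exactly $p$, i.e.\ orbits of prime size; your orbits have size $4$ with $P/P_i\cong(\bbz/2)^2$, which is outside that scope. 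Moreover the $p=2$ criterion is genuinely delicate there: the configuration of \emph{two} distinct index-two stabilizers in $(\bbz/2)^2$ does not yield non-invertibility -- this is exactly why Proposition \ref{evenS} requires $n/2\geq 3$ and why $n=4$ needs a separate argument (via \cite{Endo11}*{Theorem 4.3}) in the proof of Theorem \ref{mainS}. Your fallback inherits this problem and fails at $n=8$: restricting to $D=\lan\delta_1,\dots,\delta_\ell\ran$ gives $X|_D\cong Y\sqcup Y$ with $Y=\bigsqcup_i D/D_i$, and since $J_{Y\sqcup Y}\cong\bbz[Y]\oplus J_Y$ (the automorphism $(a,b)\mapsto(a-b,b)$ of $\bbz[Y]\oplus\bbz[Y]$ carries $(N_Y,N_Y)$ to $(0,N_Y)$), you are reduced to the $\ell$ distinct hyperplanes $D_1,\dots,D_\ell$ of $(\bbz/2)^\ell$; for $\ell=2$ this is precisely the exceptional two-hyperplane configuration, so no obstruction can be extracted. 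Thus the main route is unsupported for all $n$, and the fallback covers only $n\geq 12$.

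The repair is to restrict to a \emph{smaller} subgroup than yours, chosen so that every orbit is regular rather than so that the blocks stay independent: take the diagonal Klein four-group $P'=\lan\rho_1,\rho_2\ran\subset P$, where $\rho_1=\prod_{i=1}^{n/4}(4i-3\ \ 4i-2)(4i-1\ \ 4i)$ and $\rho_2=\prod_{i=1}^{n/4}(4i-3\ \ 4i-1)(4i-2\ \ 4i)$. Each block $B_i$ is then a single regular $P'$-orbit, so $\bbz[A_n/A_{n-1}]\cong\bbz[P']^{(n/4)}$ as $P'$-lattices; by \cite{Endo11}*{Corollary 1.4}, $J_{A_n/A_{n-1}}\cong J_{P'}\oplus\bbz[P']^{(n/4-1)}$, and since $P'$ is not cyclic, $\rho_{P'}(J_{P'})$ is not invertible by \cite{EM75}*{Theorem 1.5}; Lemmas \ref{lemma21} and \ref{lemma22} then finish. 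This is the paper's proof, and it avoids \cite{Endo01} entirely. Note the asymmetry: since $P'\subset P$, non-invertibility over $P'$ implies your claimed non-invertibility over $P$ (so your claim is in fact true), but your claim cannot serve as the engine of the proof without a version of \cite{Endo01}*{Theorem 1} valid for non-cyclic orbit types, which you have not supplied.
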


\begin{proof}
Define $\rho_1 = (1 \ 2)(3 \ 4) \cdot (5 \ 6)(7 \ 8) \cdot \dots \cdot (n-3 \ n-2) (n-1 \ n), \rho_2 = (1 \ 3)(2 \ 4) \cdot (5 \ 7)(6 \ 8) \cdot \dots \cdot (n-3 \ n-1) (n-2 \ n)$.
Then $P := \lan \rho_1, \rho_2 \ran$ is an elementary abelian $2$-subgroup, and as $P$-lattices, $\bbz[A_n/A_{n-1}]\cong (\bbz[P])^{n/4}$.
By \cite{Endo11}*{Corollary 1.4},
we have
\[ J_{A_n/A_{n-1}} \cong J_P \oplus (\bbz[P])^{n/4-1}. \]
Since $P$ is not cyclic, $\rho_P(J_{A_n / A_{n-1}})=\rho_P(J_P)$ is not invertible by \cite{EM75}*{Theorem 1.5}.
Hence $\rho_{A_n}(J_{A_n/A_{n-1}})$ is not $2$-invertible by Lemma \ref{lemma22}.
\end{proof}

\begin{prop}\label{evenA2}
Let $n$ be an integer.
Assume that $2 || n$ and $n \geq 6$.
Then, $\rho_{A_n}(J_{A_n / A_{n-1}})$ is not $2$-invertible.
\end{prop}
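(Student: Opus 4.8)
The plan is to follow the strategy of Propositions \ref{evenA1} and \ref{evenS}: exhibit an explicit elementary abelian $2$-subgroup $P\leq A_n$, compute the restriction of $\bbz[A_n/A_{n-1}]$ to $P$, show that $\rho_P(J_{A_n/A_{n-1}})$ is not invertible, and then apply Lemma \ref{lemma22} to conclude that $\rho_{A_n}(J_{A_n/A_{n-1}})$ is not $2$-invertible. Since $2\| n$ and $n\geq 6$, I would write $n=4f+6$ with $f=(n-6)/4\geq 0$, reserving the first $4f$ letters for free orbits and the last six letters for the delicate part.

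First I would construct $P=\lan\sigma,\tau\ran$. On each block $\{4j+1,4j+2,4j+3,4j+4\}$ with $0\leq j\leq f-1$, let $\sigma$ and $\tau$ act as the regular representation of a Klein four-group; on the last six letters let $\sigma$ act as $(n-5\ n-4)(n-3\ n-2)$ and $\tau$ as $(n-5\ n-4)(n-1\ n)$. A direct count shows that each of $\sigma,\tau,\sigma\tau$ is a product of $2f+2$ transpositions, hence lies in $A_n$, so $P\cong(\bbz/2)^2$. The three orbits on the last six letters then have point-stabilizers $\lan\sigma\tau\ran,\lan\tau\ran,\lan\sigma\ran$, respectively, the three distinct subgroups of index two, while the first $4f$ letters give $f$ free orbits. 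Reading off the orbit decomposition yields, as $P$-lattices,
\[ \bbz[A_n/A_{n-1}]\cong \bbz[P]^{f}\oplus \bbz[P/\lan\sigma\ran]\oplus\bbz[P/\lan\tau\ran]\oplus\bbz[P/\lan\sigma\tau\ran]. \]

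Next I would strip the free orbits: by \cite{Endo11}*{Corollary 1.4} one has $J_{A_n/A_{n-1}}\cong J'\oplus\bbz[P]^{f}$, where $J'$ is the norm-one lattice attached to $\bbz[P/\lan\sigma\ran]\oplus\bbz[P/\lan\tau\ran]\oplus\bbz[P/\lan\sigma\tau\ran]$; since $\bbz[P]^{f}$ is permutation, $\rho_P(J_{A_n/A_{n-1}})=\rho_P(J')$. This $J'$ is the lattice given by all three subgroups of index two in $P$, and its flasque class is not invertible by \cite{Endo01}*{Theorem 1}, exactly as in Propositions \ref{evenS} and \ref{oddprimeA}. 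Lemma \ref{lemma22} then upgrades this to the assertion that $\rho_{A_n}(J_{A_n/A_{n-1}})$ is not $2$-invertible.

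The main obstacle is the construction in the second paragraph, and it is precisely where the hypothesis $2\| n$ (rather than $4\mid n$) enters. When $4\mid n$ one can tile all $n$ letters by free Klein four-orbits, as in Proposition \ref{evenA1}; here $4\nmid n$ rules out a free action, and simply leaving a single transposed pair would render one of $\sigma,\tau,\sigma\tau$ odd. The parity bookkeeping forces one to use three size-two orbits simultaneously, one for each subgroup of index two, and this is exactly what produces the ``three-hyperplane'' lattice $J'$ and hence its non-invertibility. Should \cite{Endo01}*{Theorem 1} not apply verbatim to this configuration, the same conclusion should follow by verifying that $J'$ is similar to $J_P$ and invoking \cite{EM75}*{Theorem 1.5}, since $P$ is non-cyclic.
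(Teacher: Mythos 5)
Your main line is correct, and it is essentially the paper's own strategy: produce a Klein four subgroup $P \leq A_n$ whose orbits on the $n$ letters realize all three index-two subgroups of $P$ as point stabilizers, deduce from \cite{Endo01}*{Theorem 1} that $\rho_P(J_{A_n/A_{n-1}})$ is not invertible, and conclude with Lemma \ref{lemma22}. The difference is the choice of $P$, and it costs you one extra step. The paper takes $\rho_1=(1\ 2)(3\ 4)$ and $\rho_2=(1\ 2)(5\ 6)(7\ 8)\cdots(n-1\ n)$; the shared transposition $(1\ 2)$ solves the parity problem you describe, and with this choice every orbit has length two, giving
\[ \bbz[A_n/A_{n-1}]\cong\bbz[P/\lan\rho_1\ran]^{(n/2-2)}\oplus\bbz[P/\lan\rho_2\ran]\oplus\bbz[P/\lan\rho_1\rho_2\ran], \]
so \cite{Endo01}*{Theorem 1} applies immediately, with no free summands to dispose of. Your $P$ produces $f=(n-6)/4$ free orbits plus three orbits of length two (your parity counts and stabilizer computations are all correct), so you additionally need the reduction $J_{A_n/A_{n-1}}\cong J'\oplus\bbz[P]^{f}$. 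That reduction is true, and whether or not \cite{Endo11}*{Corollary 1.4} is stated in this mixed generality (the paper only invokes it for totally free decompositions), it follows from an elementary argument: the $P$-map $\bbz[P/\lan\sigma\ran]\to\bbz[P]$, $g\lan\sigma\ran\mapsto g(e+\sigma)$, carries the norm element to the norm element, so a unipotent automorphism of $\bbz[P]^{f}\oplus N$ moves the image of the diagonal copy of $\bbz$ entirely into $N$, splitting off $\bbz[P]^{f}$ from the cokernel. One caveat on your phrase ``exactly as in Propositions \ref{evenS} and \ref{oddprimeA}'': the configuration you reach (all three index-two subgroups of a rank-two group) is not literally the one in those propositions, where the stabilizers are the coordinate hyperplanes of $(\bbz/2)^{m}$ with $m\geq 3$, respectively of $(\bbz/p)^{m}$ with $m\geq 2$; but it is precisely the configuration the paper itself feeds into \cite{Endo01}*{Theorem 1} in this proposition (with multiplicities $(n/2-2,1,1)$), so your application is as justified as the paper's.

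There is one genuine error, though it sits in your contingency plan rather than in the main argument: $J'$ is \emph{not} similar to $J_P$, so the proposed fallback via \cite{EM75}*{Theorem 1.5} would not work. Since $\hat{H}^{-1}(P,-)$ vanishes on permutation lattices, it is a similarity invariant; from $0\to\bbz\to\bbz[P]\to J_P\to 0$ one gets $\hat{H}^{-1}(P,J_P)\cong\hat{H}^{0}(P,\bbz)\cong\bbz/4$, whereas from $0\to\bbz\to\bigoplus_{i=1}^{3}\bbz[P/H_i]\to J'\to 0$ one gets $\hat{H}^{-1}(P,J')\cong\ker\bigl(\bbz/4\to(\bbz/2)^{3}\bigr)\cong\bbz/2$. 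This does not affect your proof, which never needs the fallback, but that closing remark should be deleted rather than relied upon.
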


\begin{proof}
Define $\rho_1= (1 \ 2) (3 \ 4)$, $\rho_2 = (1 \ 2)(5 \ 6) (7 \ 8)  \cdots (n-1 \ n)$, $\rho_3 = \rho_1 \rho_2 = (3 \ 4) (5 \ 6) \cdots (n-1 \ n)$.
Then $P:=\lan \rho_1, \rho_2 \ran = \{ e,\rho_1, \rho_2, \rho_3 \} \subset A_n$ is an elementary abelian $2$-subgroup, and as $P$-lattices,
\[ \bbz[A_n/A_{n-1}] \cong \bbz[P/\lan \rho_1 \ran]^{(n/2-2)} \oplus \bbz[P/\lan \rho_2 \ran] \oplus \bbz[P/\lan \rho_3 \ran]. \]
By \cite{Endo01}*{Theorem 1}, we see that $\rho_P(J_{A_n/A_{n-1}})$ is not invertible.
Hence $\rho_{A_n}(J_{A_n/A_{n-1}})$ is not $2$-invertible by Lemma \ref{lemma22}.
\end{proof}

\begin{thm}\label{mainA}
Let $n\geq 2$ be an integer, $A_n$ the alternating group on $n$ letters, and $p$a prime.
Then, 
the flasque class $\rho_{A_n}(J_{A_n / A_{n-1}})$ is $p$-invertible
if and only if 
$n$ is a prime or $p$ is coprime to the composite $n$.
\end{thm}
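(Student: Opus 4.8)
The plan is to mirror the proof of Theorem \ref{mainS} step for step, feeding the four propositions just established for $A_n$ into a case analysis. The first observation to record is that $[A_n : A_{n-1}] = |A_n|/|A_{n-1}| = n$, so the hypothesis ``$p$ is coprime to the composite $n$'' is literally the statement $p \nmid [A_n : A_{n-1}]$. This is precisely what lets Proposition \ref{coprime} plug in directly. I would then split on whether $n$ is prime or composite, and within the composite case treat odd primes and the prime $2$ separately.

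For the \emph{if} direction I would argue as follows. If $n$ is prime, then the norm one torus is retract rational, equivalently $\rho_{A_n}(J_{A_n/A_{n-1}})$ is invertible, by \cite{Endo11}; hence it is $p$-invertible for every prime $p$. If instead $n$ is composite but $p \nmid n = [A_n : A_{n-1}]$, then the first alternative of Proposition \ref{coprime} (the case $p \nmid [G:H]$) immediately yields that $\rho_{A_n}(J_{A_n/A_{n-1}})$ is $p$-invertible. Together these two sub-cases cover every pair $(n,p)$ for which $n$ is prime or $p$ is coprime to $n$.

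For the \emph{only if} direction I would prove the contrapositive: assuming $n$ is composite and $p \mid n$, I must show $p$-invertibility fails. When $p$ is odd this is exactly Proposition \ref{oddprimeA}. When $p = 2$, so that $n$ is even and composite, I would distinguish according to the $2$-adic valuation of $n$: if $4 \mid n$, apply Proposition \ref{evenA1}; if $2 \parallel n$ (that is, $2 \mid n$ but $4 \nmid n$), apply Proposition \ref{evenA2}. To invoke the latter I need $n \geq 6$, which is automatic here, since a composite $n$ with $2 \parallel n$ must be of the form $n = 2m$ with $m$ odd and $m > 1$, hence $n \geq 6$.

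The only point requiring genuine care — everything else being a direct appeal to the propositions — is checking that this even-case dichotomy is exhaustive and that no separate handling of small $n$ is needed. Unlike the symmetric case, where $n = 4$ had to be dispatched by hand through \cite{Endo11}, here $n = 4$ satisfies $4 \mid n$ and is absorbed into Proposition \ref{evenA1}, while the constraint $n \geq 6$ in Proposition \ref{evenA2} is forced by $2 \parallel n$ together with compositeness. I would close by combining the two directions to obtain the stated equivalence for all $n \geq 2$.
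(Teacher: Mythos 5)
Your proof is correct and follows essentially the same route as the paper, which simply invokes Propositions \ref{oddprimeA}, \ref{evenA1}, and \ref{evenA2} together with the case analysis from the proof of Theorem \ref{mainS}. Your explicit checks --- that $n=4$ is absorbed by Proposition \ref{evenA1} (unlike the symmetric case) and that $2 \parallel n$ with $n$ composite forces $n \geq 6$ --- are exactly the details the paper leaves implicit.
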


\begin{proof}
This follows from Propositions \ref{oddprimeA}, \ref{evenA1}, and \ref{evenA2} as in the proof of Theorem \ref{mainS}. 
\end{proof}

\begin{proof}[Proof of Theorem \ref{intromain}]
The norm one torus $R^{(1)}_{K/k} \bbg_m$ associated to $K/k$ is $p$-retract rational over $k$ if and only if the flasque class of its character module is $p$-invertible \cite{Sca20}*{Proposition 3.1}.
The result now follows from Theorems \ref{mainS} and \ref{mainA}.
\end{proof}

\bibliography{myrefs}

\end{document}